\newtheorem{theorem}{Theorem}
\newtheorem{lemma}[theorem]{Lemma}
\newtheorem{proposition}{Proposition}
\theoremstyle{definition}
{

}
\long\def\symbolfootnote[#1]#2{\begingroup
\def\thefootnote{\fnsymbol{footnote}}\footnote[#1]{#2}\endgroup}
\newcommand{\red}[1][\sigma]{\mathrm{red}(#1)}
\newcommand{\sg}{\sigma}
\newcommand{\mmp}{\mathrm{mmp}}
\def\A{\mathcal{A}}
\newcommand{\fig}[2]{\begin{figure}[ht]
\centerline{\scalebox{.66}{\epsfig{file=#1.eps}}}
\caption{#2}
\label{fig:#1}
\end{figure}}
\newcommand{\shadetheboxes}[1]{
	\foreach \x/\y in {#1}
      	\fill[pattern color = black!65, pattern=north east lines] (\x,\y) rectangle +(1,1);
	}
\newcommand{\drawthegrid}[1]{
	\draw (0.01,0.01) grid (#1+0.99,#1+0.99);
	}
\newcommand{\drawtheclpattern}[1]{
	\foreach \x/\y in {#1}
      	\filldraw (\x,\y) circle (6pt);
	}
\newcommand{\drawspecialbox}[1]{
	\foreach \x/\y/\z/\w/\A in {#1}
		{
       		\fill[color = white!100, opacity=1, rounded corners = 1.5pt] (\x+0.125,\y+0.125) rectangle (\z-0.125,\w-0.125);
       		\draw[color = black, rounded corners = 1.5pt] (\x+0.125,\y+0.125) rectangle (\z-0.125,\w-0.125);
       		\fill[black] (\x/2+\z/2,\y/2+\w/2) node {$\scriptstyle\A$};
       	}
    }
\newcommand{\mmpattern}[5]{									
  \raisebox{0.6ex}{
  \begin{tikzpicture}[scale=0.35, baseline=(current bounding box.center), #1]
  \useasboundingbox (0.0,-0.1) rectangle (#2+1.4,#2+1.1);
    
    \shadetheboxes{#4}
    
    \drawthegrid{#2}
    
    \drawspecialbox{#5}
    
    \drawtheclpattern{#3}

  \end{tikzpicture}}
}
\title{Quadrant marked mesh patterns in alternating permutations II}
\author{
Sergey Kitaev \\
\small University of Strathclyde\\[-0.8ex]
\small Livingstone Tower, 26 Richmond Street\\[-0.8ex]
\small Glasgow G1 1XH, United Kingdom\\[-0.8ex]
\small \texttt{sergey.kitaev@cis.strath.ac.uk}
\and
Jeffrey Remmel \\
\small Department of Mathematics\\[-0.8ex]
\small University of California, San Diego\\[-0.8ex]
\small La Jolla, CA 92093-0112. USA\\[-0.8ex]
\small \texttt{jremmel@ucsd.edu}
}
\date{\small Submitted: Date 1;  Accepted: Date 2;
 Published: Date 3.\\
\small MR Subject Classifications: 05A15, 05E05}
\begin{document}
\maketitle

\begin{abstract}
\noindent \

This paper is continuation of the systematic study of distribution of 
quadrant marked mesh patterns initiated in \cite{kitrem}. 
We study quadrant marked mesh patterns on 
up-down and down-up permutations.  \\

\noindent {\bf Keywords:} permutation statistics, marked mesh pattern,
distribution 
\end{abstract}

\section{Introduction}

The notion of mesh patterns was introduced by Br\"and\'en and Claesson \cite{BrCl} to provide explicit expansions for certain permutation statistics as, possibly infinite, linear combinations of (classical) permutation patterns (see \cite{kit} for a comprehensive introduction to the theory of permutation patterns).  This notion was further studied in \cite{AKV,HilJonSigVid,kitrem,kitrem2,kitremtie1,kitremtie2,Ulf}. 

 Let $\sigma = \sg_1 \ldots \sg_n$ be a permutation in 
the symmetric group $S_n$ 
written in one-line notation. Then we will consider the 
graph of $\sg$, $G(\sg)$, to be the set of points $(i,\sg_i)$ for 
$i =1, \ldots, n$.  For example, the graph of the permutation 
$\sg = 471569283$ is pictured in Figure 
\ref{fig:basic}.  Then if we draw a coordinate system centered at a 
point $(i,\sg_i)$, we will be interested in  the points that 
lie in the four quadrants I, II, III, and IV of that 
coordinate system as pictured 
in Figure \ref{fig:basic}.  For any $a,b,c,d \in  
\mathbb{N}$ where $\mathbb{N} = \{0,1,2, \ldots \}$ is the set of 
natural numbers and any $\sg = \sg_1 \ldots \sg_n \in S_n$, 
we say that $\sg_i$ matches the 
quadrant marked mesh pattern $MMP(a,b,c,d)$ in $\sg$ if in $G(\sg)$  relative 
to the coordinate system which has the point $(i,\sg_i)$ as its  
origin,  there are 
$\geq a$ points in quadrant I, 
$\geq b$ points in quadrant II, $\geq c$ points in quadrant 
III, and $\geq d$ points in quadrant IV.  
For example, 
if $\sg = 471569283$, the point $\sg_4 =5$  matches 
the quadrant marked mesh pattern $MMP(2,1,2,1)$ since relative 
to the coordinate system with origin $(4,5)$,  
there are 3 points in $G(\sg)$ in quadrant I, 
1 point in $G(\sg)$ in quadrant II, 2 points in $G(\sg)$ in quadrant III, and 2 points in $G(\sg)$ in 
quadrant IV.  Note that if a coordinate 
in $MMP(a,b,c,d)$ is 0, then there is no condition imposed 
on the points in the corresponding quadrant. In addition, we shall 
consider patterns  $MMP(a,b,c,d)$ where 
$a,b,c,d \in \mathbb{N} \cup \{\emptyset\}$. Here when 
one of the parameters $a$, $b$, $c$, or $d$ in  
$MMP(a,b,c,d)$ is the empty set, then for $\sg_i$ to match  
$MMP(a,b,c,d)$ in $\sg = \sg_1 \ldots \sg_n \in S_n$, 
it must be the case that there are no points in $G(\sg)$ relative 
to coordinate system with origin $(i,\sg_i)$ in the corresponding 
quadrant. For example, if $\sg = 471569283$, the point 
$\sg_3 =1$ matches 
the marked mesh pattern $MMP(4,2,\emptyset,\emptyset)$ since relative 
to the coordinate system with origin $(3,1)$, 
there are 6 points in $G(\sg)$ in quadrant I, 
2 points in $G(\sg)$ in quadrant II, no  points in $G(\sg)$ in quadrant III, and no  points in $G(\sg)$ in quadrant IV.  We let 
$\mmp^{(a,b,c,d)}(\sg)$ denote the number of $i$ such that 
$\sg_i$ matches the marked mesh pattern $MMP(a,b,c,d)$ in $\sg$.

\fig{basic}{The graph of $\sg = 471569283$.}

Note how the (two-dimensional) notation of \'Ulfarsson \cite{Ulf} for marked mesh patterns corresponds to our (one-line) notation for quadrant marked mesh patterns. For example,

\[
MMP(0,0,k,0)=\mmpattern{scale=2.3}{1}{1/1}{}{0/0/1/1/k}\hspace{-0.25cm},\  MMP(k,0,0,0)=\mmpattern{scale=2.3}{1}{1/1}{}{1/1/2/2/k}\hspace{-0.25cm},
\]

\[
MMP(0,a,b,c)=\mmpattern{scale=2.3}{1}{1/1}{}{0/1/1/2/a} \hspace{-2.07cm} \mmpattern{scale=2.3}{1}{1/1}{}{0/0/1/1/b} \hspace{-2.07cm} \mmpattern{scale=2.3}{1}{1/1}{}{1/0/2/1/c} \ \mbox{ and }\ \ \ MMP(0,0,\emptyset,k)=\mmpattern{scale=2.3}{1}{1/1}{0/0}{1/0/2/1/k}\hspace{-0.25cm}.
\]

Kitaev and Remmel \cite{kitrem} studied the distribution of 
quadrant marked mesh patterns in the symmetric group $S_n$ 
and Kitaev, Remmel, and 
Tiefenbruck \cite{kitremtie1,kitremtie2} 
studied the distribution of quadrant marked mesh patterns 
in $132$-avoiding permutations in $S_n$.
In  \cite{kitrem2}, Kitaev and Remmel studied 
the distribution of the statistics 
$\mmp^{(1,0,0,0)}$, $\mmp^{(0,1,0,0)}$, 
$\mmp^{(0,0,1,0)}$, 
and $\mmp^{(0,0,0,1)}$ in the set of up-down and down-up permutations. 
The main goal of this paper is to study the distribution 
of the statistics $\mmp^{(1,0,\emptyset,0)}$, $\mmp^{(0,1,0,\emptyset)}$, 
$\mmp^{(0,\emptyset,0,1)}$, 
and $\mmp^{(\emptyset,0,1,0)}$ in the set of up-down and down-up permutations. 
Given a permutation $\sg = \sg_1 \ldots \sg_n \in S_n$, we let 
$Des(\sg) = \{i:\sg_i > \sg_{i+1}\}$.  Then we say 
that $\sg$ is an {\em up-down permutation} if $Des(\sg)$ is the 
set of all even numbers less than or equal to $n$ and a 
{\em down-up permutation} if $Des(\sg)$ is the 
set of all odd numbers less than or equal to $n$. That is, 
$\sg$ is an up-down permutation if 
$$\sg_1 < \sg_2 > \sg_3< \sg_4 > \sg_5 < \cdots$$ 
and $\sg$ is an down-up permutation if 
$$\sg_1 > \sg_2 < \sg_3 > \sg_4 < \sg_5 > \cdots. $$ 
Let  $UD_n$ denote the set of all up-down permutations in $S_n$ and 
$DU_n$ denote the set of all down-up permutations in $S_n$. 
Given a permutation $\sg = \sg_1 \ldots \sg_n \in S_n$, 
we define the reverse of $\sg$, $\sg^r$, to be 
$\sg_n \sg_{n-1} \ldots \sg_2 \sg_1$ and the complement of 
$\sg$, $\sg^c$, to be $(n+1-\sg_1) \ldots (n+1 -\sg_n)$.

For $n \geq 1$, we let 
\begin{eqnarray*}
A^{(a,b,c,d)}_{2n}(x) &=& \sum_{\sg \in UD_{2n}}x^{\mmp^{(a,b,c,d)}(\sg)}, \ \ \ \ \ \ \ 
B^{(a,b,c,d)}_{2n-1}(x) = \sum_{\sg \in UD_{2n-1}} x^{\mmp^{(a,b,c,d)}(\sg)}, \\
C^{(a,b,c,d)}_{2n}(x) &=& \sum_{\sg \in DU_{2n}}x^{\mmp^{(a,b,d,d)}(\sg)}, \ \mbox{and} \ 
D^{(a,b,c,d)}_{2n-1}(x) = \sum_{\sg \in DU_{2n-1}}x^{\mmp^{(a,b,c,d)}(\sg)}. 
\end{eqnarray*}
We then have the following simple proposition. 
\begin{proposition}\label{prop1} For all $n \geq 1$, 

\begin{itemize} 
\item[\rm{(1)}]  $\displaystyle  A^{(a,b,c,d)}_{2n}(x) =  C^{(b,a,d,c)}_{2n}(x) = 
C^{(d,c,b,a)}_{2n}(x) = A^{(c,d,a,b)}_{2n}(x)$,\\
\item[\rm{(2)}] $\displaystyle C^{(a,b,c,d)}_{2n}(x) =  A^{(b,a,d,c)}_{2n}(x) = 
A^{(d,c,b,a)}_{2n}(x) = C^{(c,d,a,b)}_{2n}(x)$,\\
\item[\rm{(3)}] $\displaystyle
B^{(a,b,c,d)}_{2n-1}(x) = B^{(b,a,d,c)}_{2n-1}(x) = 
D^{(d,c,b,a)}_{2n-1}(x) = D^{(c,d,a,b)}_{2n-1}(x)$, and \\
\item[\rm{(4)}] $\displaystyle D^{(a,b,c,d)}_{2n-1}(x) = D^{(b,a,d,c)}_{2n-1}(x) = 
B^{(d,c,b,a)}_{2n-1}(x) = B^{(c,d,a,b)}_{2n-1}(x)$.
\end{itemize}
\end{proposition}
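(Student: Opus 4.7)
The plan is to exploit the two natural involutions on $S_n$---reverse $\sigma \mapsto \sigma^r$ and complement $\sigma \mapsto \sigma^c$---together with their composition $\sigma \mapsto \sigma^{rc}$.

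First, I would record how these involutions act on the quadrant-marked mesh statistic. Reverse reflects the graph of $\sigma$ horizontally, so a point in quadrant $\mathrm{I}$ relative to $(i,\sigma_i)$ becomes a point in quadrant $\mathrm{II}$ relative to the image point; hence reverse swaps $\mathrm{I} \leftrightarrow \mathrm{II}$ and $\mathrm{III} \leftrightarrow \mathrm{IV}$. Complement reflects vertically, swapping $\mathrm{I} \leftrightarrow \mathrm{IV}$ and $\mathrm{II} \leftrightarrow \mathrm{III}$. Their composition is a half-turn, swapping $\mathrm{I} \leftrightarrow \mathrm{III}$ and $\mathrm{II} \leftrightarrow \mathrm{IV}$. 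Consequently, for every $\sigma \in S_n$,
$$\mmp^{(a,b,c,d)}(\sigma) = \mmp^{(b,a,d,c)}(\sigma^r) = \mmp^{(d,c,b,a)}(\sigma^c) = \mmp^{(c,d,a,b)}(\sigma^{rc}).$$

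Second, I would determine where each involution sends $UD_n$ and $DU_n$, based on the parity of $n$. Since reverse sends a descent at position $j$ to a descent at position $n-j$ while complement swaps every ascent with a descent, a short parity check gives the following: when $n$ is even, both $r$ and $c$ bijectively interchange $UD_n$ with $DU_n$ while $rc$ preserves each; when $n$ is odd, $r$ preserves both $UD_n$ and $DU_n$ (the descent set $\{2,4,\ldots,n-1\}$ is fixed by $j \mapsto n-j$), whereas $c$ and $rc$ interchange them.

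The four equalities in (1)--(4) then follow by summing the three identities from the first step over the appropriate set of alternating permutations and reindexing via the bijection from the second step. For instance, summing the first identity over $\sigma \in UD_{2n}$ yields $A^{(a,b,c,d)}_{2n}(x) = C^{(b,a,d,c)}_{2n}(x)$ because $r$ is a bijection $UD_{2n} \to DU_{2n}$; the other two equalities of (1) follow from $c$ and $rc$, and parts (2)--(4) are entirely analogous, with the correspondences between $A$/$C$ and $B$/$D$ dictated by the parity. No step is a genuine obstacle; the only place requiring attention is the parity bookkeeping in the second step, since reverse behaves differently on $UD_n$ when $n$ is odd versus even.
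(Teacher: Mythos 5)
Your proposal is correct and follows essentially the same route as the paper: the identity $\mmp^{(a,b,c,d)}(\sigma)=\mmp^{(b,a,d,c)}(\sigma^r)=\mmp^{(d,c,b,a)}(\sigma^c)=\mmp^{(c,d,a,b)}((\sigma^r)^c)$ combined with tracking where reverse and complement send $UD_n$ and $DU_n$. The only difference is that you make the parity bookkeeping explicit, whereas the paper spells out only part (1) and leaves the odd-length cases as ``similar.''
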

\begin{proof}
It is easy to see that for any 
$\sg \in S_n$, $$\mmp^{(a,b,c,d)}(\sg)=\mmp^{(b,a,d,c)}(\sg^r)= 
\mmp^{(d,c,b,a)}(\sg^c)= \mmp^{(c,d,a,b)}((\sg^r)^c).$$ 
Then part 1 easily follows since   
$$ \sg \in UD_{2n} \iff \sg^r \in DU_{2n} \iff \sg^c \in DU_{2n} \iff 
(\sg^r)^c \in UD_{2n}.$$ 

Parts 2, 3, and 4 are proved in a similar manner. 
\end{proof}

In \cite{kitrem2}, we studied the distribution of the statistics $\mmp^{(1,0,0,0)}$, $\mmp^{(0,1,0,0)}$, $\mmp^{(0,0,1,0)}$, 
and $\mmp^{(0,0,0,1)}$ in the set of up-down and down-up permutations. 
It follows from Proposition \ref{prop1} that the study the distribution 
of the statistics $\mmp^{(1,0,0,0)}$, $\mmp^{(0,1,0,0)}$, $\mmp^{(0,0,1,0)}$, 
and $\mmp^{(0,0,0,1)}$ in the set of up-down and down-up permutations 
can be reduced to the study of the following generating functions: 
\begin{eqnarray*}
A^{(1,0,0,0)}(t,x) &=& 1+ \sum_{n\geq 1} A^{(1,0,0,0)}_{2n}(x) \frac{t^{2n}}{(2n)!}, \\
B^{(1,0,0,0)}(t,x) &=& \sum_{n\geq 1} B^{(1,0,0,0)}_{2n-1}(x)\frac{t^{2n-1}}{(2n-1)!}, \\
C^{(1,0,0,0)}(t,x) &=& 1+ \sum_{n\geq 1} C^{(1,0,0,0)}_{2n}(x) \frac{t^{2n}}{(2n)!}, \ \mbox{and} \\
D^{(1,0,0,0)}(t,x) &=& \sum_{n\geq 1} D^{(1,0,0,0)}_{2n-1}(x) \frac{t^{2n-1}}{(2n-1)!}.
\end{eqnarray*} 

In the case when $x=1$, these generating functions are well known. 
That is, for any $(a,b,c,d)$, let 
$A_{2n}(1) = A^{(a,b,c,d)}_{2n}(1)$, $B_{2n+1}(1) = B^{(a,b,c,d)}_{2n+1}(1)$, 
$C_{2n}(1) = C^{(a,b,c,d)}_{2n}(1)$, and 
$D_{2n}(1) = D^{(a,b,c,d)}_{2n}(1)$.  
The operation of complementation shows that 
 $A_{2n}(1) = C_{2n}(1)$ and $B_{2n_1}(1) = D_{2n-1}(1)$ for 
all $n \geq 1$.  Andr\'{e} \cite{Andre1,Andre2}
proved that
\begin{equation}\label{sec}
1+ \sum_{n\geq 0} A_{2n}(1) \frac{t^{2n}}{(2n)!} = \sec(t)
\end{equation}
and 
\begin{equation}\label{tan}
\sum_{n\geq 1} B_{2n-1}(1) \frac{t^{2n+1}}{(2n+1)!} = \tan(t).
\end{equation}

In \cite{kitrem2}, we proved the following which can be 
viewed as a refinement of Andr\'{e}'s results.  

\begin{theorem}\label{thm:oldmain}
\begin{eqnarray}
A^{(1,0,0,0)}(t,x) &=& (\sec(xt))^{1/x}, \\
B^{(1,0,0,0)}(t,x) &=& (\sec(xt))^{1/x} \int_0^t (\sec(xz))^{-1/x} dz, \\
C^{(1,0,0,0)}(t,x)&=& 1 + \int_0^t (\sec(xy))^{1+\frac{1}{x}}\int_0^y (\sec(xz))^{1/x} dz\ dy, \ \mbox{and}\\
D^{(1,0,0,0)}(t,x) &=& \int_0^t (\sec(xz))^{1+\frac{1}{x}}dz.
\end{eqnarray}
\end{theorem}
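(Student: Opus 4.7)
The plan is to condition on the position of the largest entry $M$ in an alternating permutation $\sg$.  Since $M$ must occupy a peak, its position has constrained parity (even when $\sg$ is up-down, odd when $\sg$ is down-up), and writing $\sg = L\,M\,R$ the two blocks $L$ and $R$ are themselves alternating permutations whose type (up-down or down-up) and length parity are forced by the monotonicity of $\sg$ around $M$.  The crucial observation is that every position of $L$ lies to the left of $M$ and is smaller than $M$, so it \emph{automatically} matches $MMP(1,0,0,0)$; the position of $M$ itself never matches; and matches inside $R$ depend only on the restriction of $\sg$ to $R$.  Consequently $L$ contributes the factor $x^{|L|}$ \emph{independent of its internal structure}, weighted only by the number of alternating permutations of the appropriate type, while $R$ contributes the full polynomial $A^{(1,0,0,0)}_{|R|}(x)$, $B^{(1,0,0,0)}_{|R|}(x)$, $C^{(1,0,0,0)}_{|R|}(x)$, or $D^{(1,0,0,0)}_{|R|}(x)$.

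Carrying this out in all four cases produces recursions for $A^{(1,0,0,0)}_{2n}(x)$, $B^{(1,0,0,0)}_{2n-1}(x)$, $C^{(1,0,0,0)}_{2n}(x)$, and $D^{(1,0,0,0)}_{2n-1}(x)$.  Dividing each by the appropriate factorial and summing in $t$, and invoking Andr\'e's classical identities \eqref{sec}--\eqref{tan} in the forms $\sec(xt)=\sum_{j\ge0}A_{2j}(1)(xt)^{2j}/(2j)!$ and $\tan(xt)=\sum_{k\ge1}B_{2k-1}(1)(xt)^{2k-1}/(2k-1)!$, one recognises each resulting convolution as an EGF product.  Writing $A(t)=A^{(1,0,0,0)}(t,x)$, and similarly for $B,C,D$, the system becomes
\begin{align*}
A'(t)&=\tan(xt)\,A(t), & B'(t)&=1+\tan(xt)\,B(t),\\
C'(t)&=\sec(xt)\,B(t), & D'(t)&=\sec(xt)\,A(t),
\end{align*}
together with the initial data $A(0)=C(0)=1$ and $B(0)=D(0)=0$.

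I would then solve this triangular system in sequence.  The equation $(\log A)'=\tan(xt)$ integrates immediately to $A(t)=\sec(xt)^{1/x}$.  The linear first-order ODE for $B$ admits $A^{-1}$ as an integrating factor, yielding $(A^{-1}B)'=A^{-1}$ and hence $B(t)=A(t)\int_0^t A(z)^{-1}\,dz$.  The right-hand side of the ODE for $D$ is $\sec(xt)\,A(t)=\sec(xt)^{1+1/x}$, which integrates directly.  Substituting the formula for $B$ into the ODE for $C$ and integrating once more produces the stated double integral.  The main obstacle is the combinatorial bookkeeping at the very first step: in each of the four decompositions one must correctly read off the length parity and the up-down vs.\ down-up type of \emph{both} blocks $L$ and $R$ (and handle the boundary cases where $L$ or $R$ is empty), so that the correct pair of generating functions is coupled in each recursion.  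Once this is done, the passage to the coupled ODE system and its solution are routine.
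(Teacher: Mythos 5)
Your proposal is correct, and it is essentially the method of this line of papers: the present paper does not reprove Theorem \ref{thm:oldmain} (it is imported from \cite{kitrem2}), but its Section 2 proofs of Theorem \ref{thm:main} follow exactly your template --- decompose at a distinguished extreme entry, obtain a binomial convolution recursion, pass to exponential generating functions via \eqref{sec}--\eqref{tan}, and solve the resulting first-order linear ODEs. The only difference is the choice of pivot: for $MMP(1,0,\emptyset,0)$ the paper splits at the position of $1$, whereas for $MMP(1,0,0,0)$ your split at the maximum is the right adaptation, since every entry to the left of the maximum automatically acquires a point in quadrant I while matches to its right are self-contained; the parities and types of $L$ and $R$ work out exactly as you describe in all four cases. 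Your derivation in fact exposes a typo in the displayed statement: solving $\partial_t C = \sec(xt)\,B(t,x)$ with your formula for $B$ gives the inner integrand $(\sec(xz))^{-1/x}$, not $(\sec(xz))^{1/x}$; the printed version would yield $C^{(1,0,0,0)}_4(x)=4x+3x^2$ rather than the tabulated $x(2+3x)$, so the formula your argument produces is the correct one.
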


In this paper, we prove a different refinement of Adr\'{e}'s results by 
studying  
the distribution of the statistics $\mmp^{(1,0,\emptyset,0)}$, 
$\mmp^{(\emptyset,0,1,0)}$, $\mmp^{(0,1,0,\emptyset)}$, 
and $\mmp^{(0,\emptyset,0,1)}$ in the set of up-down and down-up permutations. 
It follows from Proposition \ref{prop1} that the study the distribution 
of the statistics $\mmp^{(1,0,\emptyset,0)}$, 
$\mmp^{(\emptyset,0,1,0)}$, $\mmp^{(0,1,0,\emptyset)}$, 
and $\mmp^{(0,\emptyset,0,1)}$ in the set of up-down and down-up permutations 
can be reduced to the study of the following generating functions: 
\begin{eqnarray*}
A^{(1,0,\emptyset,0)}(t,x) &=& 1+ \sum_{n\geq 1} A^{(1,0,\emptyset,0)}_{2n}(x) \frac{t^{2n}}{(2n)!}, \\
B^{(1,0,\emptyset,0)}(t,x) &=& \sum_{n\geq 1} B^{(1,0,\emptyset,0)}_{2n-1}(x)\frac{t^{2n-1}}{(2n-1)!}, \\
C^{(1,0,\emptyset,0)}(t,x) &=& 1+ \sum_{n\geq 1} C^{(1,0,\emptyset,0)}_{2n}(x) \frac{t^{2n}}{(2n)!}, \ \mbox{and} \\
D^{(1,0,\emptyset,0)}(t,x) &=& \sum_{n\geq 1} D^{(1,0,\emptyset,0)}_{2n-1}(x) \frac{t^{2n-1}}{(2n-1)!}.
\end{eqnarray*}

The main goal of this paper is prove the following theorem. 

\begin{theorem}\label{thm:main}
\begin{eqnarray}
A^{(1,0,\emptyset,0)}(t,x) &=& (\sec(t))^x, \\
B^{(1,0,\emptyset,0)}(t,x) &=& 
\frac{\sin(t)\cos(t)(1-x+x\sec(t))}{x+(1-x)\cos(t)} \times \\
&& \ \ \ \left( 
(1-x)\ {}_2F_1 (\frac{1}{2},\frac{1+x}{2};\frac{3}{2};(\sin(t))^2) + 
x\  {}_2F_1 (\frac{1}{2},\frac{2+x}{2};\frac{3}{2};(\sin(t))^2)\right) 
\nonumber \\
D^{(1,0,\emptyset,0)}(t,x)&=& x (\sec(t))^x \int_0^t (\cos(z))^x dz +(1-x)
\int_0^t (\sec(z))^x dz, \ \mbox{and}\\
C^{(1,0,\emptyset,0)}(t,x) &=& 
1+ \int_0^t x (\sec(z))^x (1-x+x\sec(z)) \int_0^z \cos(y)dy\ dz + \\
&& \ \ \ (1-x) \int_0^t B^{(1,0,\emptyset,0)}(t,z)dz. \nonumber 
\end{eqnarray}
\end{theorem}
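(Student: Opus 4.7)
My plan is to reduce the statistic $\mmp^{(1,0,\emptyset,0)}(\sigma)$ to a boundary-corrected left-to-right minimum count. Writing $L(\sigma)=|\mathrm{LR\,min}(\sigma)|$, I will first establish that $\mmp=L(\sigma)$ for $\sigma\in UD_{2n}$; $\mmp=L(\sigma)-[\sigma_{2n-1}=1]$ for $\sigma\in UD_{2n-1}$; $\mmp=L(\sigma)-[\sigma_1=2n-1]$ for $\sigma\in DU_{2n-1}$; and $\mmp=L(\sigma)-[\sigma_1=2n]-[\sigma_{2n}=1]$ for $\sigma\in DU_{2n}$. The argument is a parity analysis: position $i$ matches $MMP(1,0,\emptyset,0)$ precisely when $\sigma_i$ is an LR minimum and some later entry exceeds $\sigma_i$. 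Since every LR minimum in an alternating permutation sits at a position from which an ascent issues (whenever any position follows it), the second clause can fail only when the LR minimum is last (forcing it to equal $1$) or is first and happens to be the global maximum of $\sigma$.

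Next I will introduce the pure $L$-weighted EGFs $\widetilde A,\widetilde B,\widetilde C,\widetilde D$, obtained from $A^{(1,0,\emptyset,0)},\ldots,D^{(1,0,\emptyset,0)}$ by replacing $\mmp$ with $L$. Each boundary condition $\sigma_1=\max$ or $\sigma_{\mathrm{last}}=1$ factors $\sigma$ as $(\max)\cdot\gamma$ or $\gamma\cdot 1$, with $\gamma$ an alternating permutation of opposite parity and $L(\sigma)=L(\gamma)+1$. This will yield identities
\[
D^{(1,0,\emptyset,0)} = \widetilde D+(1-x)\int_0^t A^{(1,0,\emptyset,0)}(s,x)\,ds,
\]
an analogous one for $B^{(1,0,\emptyset,0)}$, and a two-level correction
\[
C^{(1,0,\emptyset,0)} = 1+\widetilde C+(1-x)\int_0^t[\widetilde B(s,x)+\widetilde D(s,x)]\,ds+(1-x)^2\int_0^t\!\!\int_0^s A^{(1,0,\emptyset,0)}(r,x)\,dr\,ds.
\]
To evaluate the $\widetilde F$'s I will decompose $\sigma=\alpha\cdot 1\cdot\beta$ at the position of the element $1$. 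Since $1$ is the global minimum, no element of $\beta$ is an LR minimum of $\sigma$, so $L(\sigma)=L(\alpha)+1$; the alternating pattern forces the parities of $|\alpha|$ and $|\beta|$. Translating the four resulting convolutions into EGF language produces the first-order ODE system
\[
\widetilde A'=x\,\widetilde A\,\tan t,\quad \widetilde D'=x\tan t\cdot\widetilde D+x,\quad \widetilde B'=x\,\widetilde A\,\sec t,\quad \widetilde C'=x\,\widetilde D\,\sec t,
\]
with $\widetilde A(0,x)=1$ and $\widetilde B(0,x)=\widetilde C(0,x)=\widetilde D(0,x)=0$ (here $\sec t$ and $\tan t$ are the plain EGFs of the even- and odd-length alternating classes).

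Solving the ODEs successively gives $\widetilde A=(\sec t)^x$; then $\widetilde D=x(\sec t)^x\int_0^t(\cos z)^x\,dz$ via the integrating factor $(\cos t)^x$; then $\widetilde B=x\int_0^t(\sec s)^{x+1}\,ds$; and finally $\widetilde C=x^2\int_0^t(\sec s)^{x+1}\int_0^s(\cos z)^x\,dz\,ds$. Substituting into the identities of the previous paragraph yields $A^{(1,0,\emptyset,0)}=(\sec t)^x$ directly; the announced closed form for $D^{(1,0,\emptyset,0)}$ is immediate; and for $C^{(1,0,\emptyset,0)}$ the algebraic regrouping $x^2\sec s+x(1-x)=x(1-x+x\sec s)$ produces the stated expression. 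For $B^{(1,0,\emptyset,0)}$ the combined formula reads $x\int_0^t(\sec s)^{x+1}\,ds+(1-x)\int_0^t(\sec s)^x\,ds$; the hypergeometric reformulation announced in the theorem follows from the classical identity $\int_0^t(\sec s)^a\,ds=\sin t\cdot{}_2F_1\!\bigl(\tfrac12,\tfrac{a+1}{2};\tfrac32;\sin^2 t\bigr)$ together with the observation that the prefactor $\sin t\cos t(1-x+x\sec t)/(x+(1-x)\cos t)$ collapses to $\sin t$. The main obstacle will be the uniform verification of the boundary-correction formulae across the four parity classes; once that parity bookkeeping is pinned down, the ODE derivations and their integrations are routine.
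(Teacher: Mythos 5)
Your proposal is correct, and at its core it is the same argument as the paper's: both decompose an alternating permutation at the position of the entry $1$, observe that nothing to the right of that position matches $MMP(1,0,\emptyset,0)$ or influences matches to its left, and arrive at exactly the ODE system $\widetilde A'=x\tan t\,\widetilde A$, $\widetilde D'=x+x\tan t\,\widetilde D$, $\widetilde B'=x\sec t\,\widetilde A$, $\widetilde C'=x\sec t\,\widetilde D$ (your $\widetilde D$ is literally the paper's $\overline D$, which also weights by $x^{\mmp(\sg)+\chi(\sg_1=\max)}$). Where you genuinely diverge is in the bookkeeping and in the endgame. First, your uniform reduction $\mmp^{(1,0,\emptyset,0)}=L(\sg)-(\mbox{boundary indicators})$ lets you treat all four families by one inclusion--exclusion; for $C$ this gives a two-level correction with a $(1-x)^2\int\!\!\int A$ cross term, whereas the paper's $\overline C$ corrects only for $\sg_1=2n$ and absorbs the $\sg_{2n}=1$ case directly into the recursion. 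I checked that your identity $C=1+\widetilde C+(1-x)\int(\widetilde B+\widetilde D)+(1-x)^2\int\!\!\int A$ collapses, via $B=\widetilde B+(1-x)\int A$ and the regrouping $x^2\sec s+x(1-x)=x(1-x+x\sec s)$, to the paper's expression, so the two organizations agree. Second, and more substantively, you derive the hypergeometric form of $B^{(1,0,\emptyset,0)}$ by hand from $B=x\int_0^t(\sec s)^{x+1}ds+(1-x)\int_0^t(\sec s)^xds$ together with $\int_0^t(\sec s)^a\,ds=\sin t\cdot{}_2F_1\bigl(\tfrac12,\tfrac{a+1}{2};\tfrac32;\sin^2t\bigr)$ and the observation that the paper's prefactor equals $\sin t$ identically (since $\cos t\,(1-x+x\sec t)=x+(1-x)\cos t$); the paper simply reports a Mathematica output here, so your route is more transparent and actually explains the shape of the answer. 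The only place to be careful is the parity analysis behind the boundary indicators (an LR minimum fails to match only when it is the final entry, hence equal to $1$, or the initial entry equal to the maximum); your statement of this is slightly informal but correct in all four cases.
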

Here ${}_2F_1(a,b;c;z)= \sum_{n=0}^\infty \frac{(a)_n(b)_n}{(c)_n} 
\frac{z^n}{n!}$ where 
$(x)_n = x(x-1) \cdots (x-n+1)$ if $n \geq 1$ and $(x)_0 =1$.

One can use these generating functions to find some initial values 
of the polynomials $A^{(1,0,\emptyset,0)}_{2n}(x)$, $B^{(1,0,\emptyset,0)}_{2n-1}(x)$, $C^{(1,0,\emptyset,0)}_{2n}(x)$,  and 
$D^{(1,0,\emptyset,0)}_{2n-1}(x)$. For example, we have used Mathematica to compute 
 the following tables. \\
\ \\
\begin{tabular}{|l|l|}
\hline
$n$ &  $A^{(1,0,\emptyset,0)}_{2n}(x)$ \\
\hline
0 & 1 \\
\hline
1 & x\\
\hline
2 & $x (2+3x)$ \\
\hline
3 & $x \left(16+30 x+15 x^2\right)$\\
\hline
4 & $x \left(272+588 x+420 x^2+105 x^3\right)$\\
\hline
5 & $x \left(7936+18960 x+16380 x^2+6300 x^3+945
x^4\right)$\\
\hline
6 & $x \left(353792+911328 x+893640 x^2+429660 x^3+103950 x^4+
10395 x^5\right)$\\
\hline
\end{tabular} \\
\ \\
\ \\
\begin{tabular}{|l|l|}
\hline
$n$ &  $B^{(1,0,\emptyset,0)}_{2n+1}(x)$ \\
\hline 
0& 1 \\
\hline
1 & $2x$\\
\hline
2 & $x \left(7+9x\right) $ \\
\hline 
3 & $x \left(77+135x+60x^2\right)$ \\
\hline
4 & $x \left(1657+3444x+2310x^2+525x^3\right)$ \\
\hline 
5 & $x
\left(58457+ 135945x+112770x^2+40950x^3+5670x^4\right)$\\
\hline
6 & $x \left(3056557+7715664x+7347945x^2+3395700x^3+777625x^4+72765x^5\right)$\\
\hline
\end{tabular}\\
\ \\
\ \\
\begin{tabular}{|l|l|}
\hline
$n$ &  $C^{(1,0,\emptyset,0)}_{2n}(x)$ \\
\hline
0 & 1 \\
\hline
1 & 1 \\
\hline 
2 & $x \left(2+3x\right)$\\
\hline 
3 & $x \left(7+35x+19x^2\right)$ \\
\hline 
4 & $x \left(77+581x+571x^2+156x^3\right)$ \\
\hline 
5 & $x \left(1657+16428x +21066x^2+9738x^3+1587x^4\right)$\\
\hline 
6 & $x \left(58457+712579x+1079747x^2 + 652452x^3+180240x^4+19290x^5\right)$
\\
\hline 
\end{tabular}\\
\ \\
\ \\
\begin{tabular}{|l|l|}
\hline
$n$ &  $D^{(1,0,\emptyset,0)}_{2n+1}(x)$ \\
\hline
0 & 1 \\
\hline 
1 & $x (1+x)$ \\
\hline 
2 & $x \left(2+9x+5x^2\right)$ \\
\hline 
3 & $x \left(16+110x+113x^2+33x^3\right)$ \\
\hline 
4 & $x \left(272+2492x+3288x^2+1605x^3+279x^4\right)$ \\
\hline 
5 & $x \left(7936+90384x+139756x^2+87456x^3+25365x^4+2895x^5\right)$ \\
\hline 
6 & $x \left(353792+4803040x+8323816x^2+6110100x^3+2297778x^4+444045x^5+35685x^6\right)$\\
\hline
\end{tabular}
\ \\

The outline of this paper is as follows.  In section 2, we shall 
prove theorem \ref{thm:main}. Then in section 3, we shall show how 
several of the entries of the tables above can be explained. In particular, 
we will derive formulas for the coefficient of the highest and 
lowest coefficient of $x$ in the polynomials 
the polynomials 
$A^{(1,0,\emptyset,0)}_{2n}(x)$, $B^{(1,0,\emptyset,0)}_{2n+1}(x)$, 
$C^{(1,0,\emptyset,0)}_{2n}(x)$, and $D^{(1,0,\emptyset,0)}_{2n+1}(x)$ 
as well as formulas for the second highest and second lowest coefficient 
of $x$ in these polynomials. Finally, in section 3, we shall 
discuss some connections with our previous work \cite{kitrem2}  on 
quadrant marked mesh patterns in alternating permutations 
as well as some directions for further research.

\section{Proof of Theorem \ref{thm:main}}

The proof of all parts of Theorem \ref{thm:main} proceed 
in the same manner. That is, there are simple  
recursions satisfied by the polynomials 
$A^{(1,0,\emptyset,0)}_{2n}(x)$, $B^{(1,0,\emptyset,0)}_{2n+1}(x)$, 
$C^{(1,0,\emptyset,0)}_{2n}(x)$, and $D^{(1,0,\emptyset,0)}_{2n+1}(x)$ 
based on the possible positions of 1 in an up-down or 
a down-up permutation. 

\subsection{The generating function $A^{(1,0,\emptyset,0)}(t,x)$}

If $\sg = \sg_1 \ldots \sg_{2n} \in UD_{2n}$, then  
$1$ must occur in one of the positions $1,3, \ldots , 2n-1$.  Let 
$UD_{2n}^{(2k+1)}$ denote the set of permutations $\sg \in UD_{2n}$ 
such that $\sg_{2k+1} = 1$.  A schematic diagram of an element
in $UD_{2n}^{(2k+1)}$ is pictured in Figure \ref{fig:ud12n}.

\fig{ud12n}{The graph of a $\sg \in UD_{2n}^{(2k+1)}$.}

Consider a $\sg =\sg_1 \ldots \sg_{2n} \in UD_{2n}^{(2k+1)}$ 
where $0 \leq k \leq n-1$. 
Note that there are $\binom{2n-1}{2k}$ ways 
to pick the elements which occur to the right of position $2k+1$ in such a  
$\sg$ and there are  $D_{2n-2k-1}(1)= B_{2n-2k-1}(1)$ ways to 
order them since 
the elements to the right of position $2k+1$ must form a 
down-up permutation of length $2n-2k-1$. The fact that 
$\sg_{2k+1} = 1$ implies that $\sg_{2k+1}$ matches 
 $MMP(1,0,\emptyset,0)$ in $\sg$ and that 
none of the elements to the right of position $2k+1$ match 
 $MMP(1,0,\emptyset,0)$ in $\sg$. 
Thus the contribution of the elements to 
the right of position $2k+1$ in 
$\sum_{\sg \in UD_{2n}^{(2k+1)}} x^{\mmp^{(1,0,\emptyset,0)}(\sg)}$ 
is $B_{2n-2k-1}(1)$. Now the only 
possible elements of   $\sg_1, \ldots, \sg_{2k}$ that 
can contribute to  $\mmp^{(1,0,\emptyset,0)}(\sg)$ are 
$\sg_1, \sg_3, \ldots, \sg_{2k-1}$.  Since each of the elements 
have an element to its right in $\sg_1 \ldots \sg_{2k}$ which 
is larger than that element, it follows that the 
elements to the right of position $2k+1$ have no effect 
on whether $\sg_1, \ldots, \sg_{2k}$ can contribute 
to $\mmp^{(1,0,\emptyset,0)}(\sg)$. 
Hence the contribution 
of the elements to the left of position $2k+1$ in 
$\sum_{\sg \in UD_{2n}^{(2k+1)}} x^{\mmp^{(1,0,\emptyset,0)}(\sg)}$ is 
$A^{(1,0,\emptyset,0)}_{2k}(x)$. It thus follows that for $n \geq 1$, 
\begin{equation*}\label{Arec1}
A^{(1,0,\emptyset,0)}_{2n}(x) =  
x \sum_{k=0}^{n-1} \binom{2n-1}{2k} B_{2n-2k-1}(1) A^{(1,0,\emptyset,0)}_{2k}(x)
\end{equation*}
or, equivalently,  
\begin{equation}\label{Arec2}
\frac{A^{(1,0,\emptyset,0)}_{2n}(x)}{(2n-1)!} = 
x\sum_{k=0}^{n-1} \frac{B_{2n-2k-1}(1)}{(2n-2k-1)!} 
\frac{A^{(1,0,\emptyset,0)}_{2k}(x)}{(2k)!}.
\end{equation}
Multiplying both sides of (\ref{Arec2}) by $t^{2n-1}$ and summing 
for $n \geq 1$, we see that 
\begin{equation*}
\sum_{n \geq 1} \frac{A^{(1,0,\emptyset,0)}_{2n}(x)t^{2n-1}}{(2n-1)!} = 
x\left(\sum_{k \geq 1}  \frac{B_{2k-1}(1)t^{2k-1}}{(2k-1)!}\right) 
\left(\sum_{k \geq 0} \frac{A^{(1,0,\emptyset,0)}_{2k}(x)t^{2k}}{(2k)!}\right).
\end{equation*} 
By (\ref{tan}), 
$$\sum_{k \geq 1}  \frac{B_{2k-1}(1)t^{2k-1}}{(2k-1)!} = 
\tan(t)
$$
so that 
\begin{equation}\label{Adiff}
\frac{\partial}{\partial t} A^{(1,0,\emptyset,0)}(t,x) =x\tan(t) A^{(1,0,\emptyset,0)}(t,x).
\end{equation}
Our initial condition is that $A^{(1,0,\emptyset,0)}(0,x)=1$.  
It is easy to check 
that the solution to this differential equation is 
\begin{equation*}\label{Afin}
A^{(1,0,\emptyset,0)}(t,x) = (\sec(t))^{x}.
\end{equation*}

\subsection{The generating function $B^{(1,0,\emptyset,0)}(t,x)$}

If $\sg = \sg_1 \ldots \sg_{2n+1} \in UD_{2n+1}$, then  
$1$ must occur in one of the positions $1,3, \ldots , 2n+1$.  Let 
$UD_{2n+1}^{(2k+1)}$ denote the set of permutations $\sg \in UD_{2n+1}$ 
such that $\sg_{2k+1} = 1$.  A schematic diagram of an element
in $UD_{2n}^{(2k+1)}$ is pictured in Figure \ref{fig:ud12n+1}.

\fig{ud12n+1}{The graph of a $\sg \in UD_{2n+1}^{(2k+1)}$.}

A permutation $\sg =\sg_1 \ldots \sg_{2n+1} \in UD_{2n+1}^{(2n+1)}$ ends 
with 1 so that $\sg_{2n+1} =1$ does not match  
$MMP(1,0,\emptyset,0)$ in $\sg$.  Moreover, 
$\red[\sg_1 \ldots \sg_{2n}] \in UD_{2n}$ and  
$\sg_{2n+1} =1$ cannot effect whether any of the other elements 
in $\sg$ match  $MMP(1,0,\emptyset,0)$. Thus 
$$ \sum_{\sg \in UD_{2n+1}^{(2n+1)}} x^{\mmp^{(1,0,\emptyset,0)}(\sg)} 
= A^{(1,0,\emptyset,0)}_{2n}(x).$$ 

Next consider $UD_{2n+1}^{(2k+1)}$ where $0 \leq k \leq n-1$. 
Note that there are $\binom{2n}{2k}$ ways 
to pick the elements which occur to the right of position $2k+1$ in such 
a $\sg$ and there are $C_{2n-2k}(1) = A_{2n-2k}(1)$ ways to order them since 
the elements to the left of position $2k$ form a down-up permutation 
of length $2k$. 
That is, the fact that 
$\sg_{2k+1} = 1$ implies that $\sg_{2k+1}$ matches 
 $MMP(1,0,\emptyset,0)$ in $\sg$ and that 
none of the elements to the right of position match 
 $MMP(1,0,\emptyset,0)$ in $\sg$.  
Thus the contribution of the elements to 
the right of position $2k+1$ in 
$\sum_{\sg \in UD_{2n+1}^{(2k+1)}} x^{\mmp^{(1,0,\emptyset,0)}(\sg)}$ 
is $C_{2n-2k}(1)= A_{2n-2k}(1)$ since 
the elements to the right of position $2k+1$ must form a 
down-up permutation of length $2n-2k$.  As we proved 
above,  
the elements to the right of position $2k$ have no effect 
on whether $\sg_1, \ldots, \sg_{2k}$ can contribute 
to $\mmp^{(1,0,\emptyset,0)}(\sg)$. It 
follows that the contribution 
of the elements to the left of position $2k+1$ in 
$\sum_{\sg \in UD_{2n+1}^{(2k+1)}} x^{\mmp^{(1,0,\emptyset,0)}(\sg)}$ is 
$A^{(1,0,\emptyset,0)}_{2k}(x)$. It thus follows that for $n \geq 1$, 
\begin{equation*}\label{Brec1}
B^{(1,0,\emptyset,0)}_{2n+1}(x) = A^{(1,0,\emptyset,0)}_{2n}(x) + 
x \sum_{k=0}^{n-1} \binom{2n}{2k} A_{2n-2k}(1) A^{(1,0,\emptyset,0)}_{2k}(x)
\end{equation*}
or, equivalently,  
\begin{equation}\label{Brec2}
\frac{B^{(1,0,\emptyset,0)}_{2n+1}(x)}{(2n)!} = 
\frac{A^{(1,0,\emptyset,0)}_{2n}(x)}{(2n)!} +
x \sum_{k=0}^{n-1} \frac{A_{2n-2k}(1)}{(2n-2k)!} 
\frac{A^{(1,0,\emptyset,0)}_{2k}(x)}{(2k)!}.
\end{equation}
Note that $B^{(1,0,\emptyset,0)}_1(x) =1$. 
Multiplying both sides of (\ref{Brec2}) by $t^{2n}$  and summing 
for $n \geq 1$, we see that 
$$\sum_{n \geq 0} \frac{B^{(1,0,\emptyset,0)}_{2n+1}(x)t^{2n}}{(2n)!} = 
\sum_{n \geq 0} \frac{A^{(1,0,\emptyset,0)}_{2n}(x)t^{2n}}{(2n)!} + 
x\left(\sum_{k \geq 1}  \frac{A_{2k}(1)t^{2k}}{(2k)!}\right) 
\left(\sum_{k \geq 0} \frac{A^{(1,0,\emptyset,0)}_{2k}(x)t^{2k}}{(2k)!}\right).$$ 
By (\ref{sec}), 
$$\sum_{k \geq 0}  \frac{A_{2k}(1)t^{2k}}{(2k)!} = 
\sec(t)$$ 
so that 
\begin{equation*}
\frac{\partial}{\partial t} B^{(1,0,\emptyset,0)}(t,x) = (\sec(t))^x + 
x(\sec(t))^x(\sec(t) -1)).
\end{equation*}
Thus
\begin{equation}\label{Bdiff}
\frac{\partial}{\partial t} B^{(1,0,\emptyset,0)}(t,x) = 
(\sec(t))^x(1-x + x\sec(t)).
\end{equation}
Our initial condition is that $B^{(1,0,\emptyset,0)}(0,x) =0$. 
We used Mathematica to solve this differential equation which 
gave the following formula for $B^{(1,0,\emptyset,0)}(t,x)$: 
\begin{eqnarray*}\label{Bfin}
B^{(1,0,\emptyset,0)}(t,x) &=& \frac{\sin(t)\cos(t)(1-x+x\sec(t))}{x+(1-x)\cos(t)} \times \\
&& \ \ \ \left( (1-x)\  {}_2F_1 \left(\frac{1}{2},\frac{1+x}{2};\frac{3}{2};(\sin(t))^2\right) + x\ {}_2F_1\left(\frac{1}{2},\frac{2+x}{2};\frac{3}{2};(\sin(t))^2\right)\right). 
 \nonumber
\end{eqnarray*}

\subsection{The generating function $D^{(1,0,\emptyset,0)}(t,x)$}

 If $\sg = \sg_1 \ldots \sg_{2n+1} \in DU_{2n+1}$, then  
$1$ must occur in one of the positions $2,4, \ldots , 2n$.  Let 
$DU_{2n+1}^{(2k)}$ denote the set of permutations $\sg \in DU_{2n+1}$ 
such that $\sg_{2k} = 1$.  A schematic diagram of an element
in $DU_{2n+1}^{(2k)}$ is pictured in Figure \ref{fig:du12n+1}.

\fig{du12n+1}{The graph of a $\sg \in DU_{2n+1}^{(2k)}$.}

Let  
$$D^{(1,0,\emptyset,0)}_{2n+1}(x,y) = \sum_{\sg \in DU_{2n+1}} x^{\mmp^{(1,0,\emptyset,0)}(\sg)}y^{\chi(\sg_1 =2n+1)}.$$
First, we want to study the polynomial 
$\overline{D}^{(1,0,\emptyset,0)}_{2n+1}(x) =  
D^{(1,0,\emptyset,0)}_{2n+1}(x,x)$. 
Suppose that $\sg = \sg_1 \ldots \sg_{2n+1}$ is an 
element of $DU_{2n+1}$. If  
$\sg_1 < 2n+1$, then $\sg_1$ will automatically contribute to 
$\mmp^{(1,0,\emptyset,0)}(\sg)$.  However, if 
$\sg_1 = 2n+1$, then $\sg_1$ will not contribute to 
$\mmp^{(1,0,\emptyset,0)}(\sg)$. Thus the difference between 
$\overline{D}^{(1,0,\emptyset,0)}_{2n+1}(x)$ and $D_{2n+1}^{(1,0,\emptyset,0)}(x)$ is that 
$\sg_1$ always contributes a factor of 
$x$ to $x^{\mmp^{(1,0,\emptyset,0)}(\sg)}x^{\chi(\sg_1 =2n+1)}$.

First we shall prove a simple recursion for $\overline{D}^{(1,0,\emptyset,0)}_{2n+1}(x)$. That is, consider a $\sg  = \sg_1 \ldots \sg_{2n+1} \in 
DU_{2n+1}^{(2k)}$ where $1 \leq  k \leq n$. 
Note that there are $\binom{2n}{2k-1}$ ways 
to pick the elements which occur to the right of position $2k$ in such a
$\sg$ and there are $D_{2n-2k+1}(1) = B_{2n-2-1}(1)$ ways to order them since 
the elements to the right of position $2k$ form a down-up permutation 
of length $2n-2k+1$. For a $\sg = \sg_1 \ldots \sg_{2n+1} \in 
DU_{2n+1}^{(2k)}$, none of the elements $\sg_i$ for $i > 2k$  
matches  $MMP(1,0,\emptyset,0)$ in $\sg$ and 
$\sg_{2k} =1$ always matches   $MMP(1,0,\emptyset,0)$ in $\sg$.
Thus the only other elements of $\sg$ that  
can possibly contribute to 
$\mmp^{(1,0,\emptyset,0)}(\sg)x^{\chi(\sg_1=2n+1)}$ are the elements 
$\sg_1, \sg_2, \sg_4, \ldots, \sg_{2k-2}$. 
Since in $\overline{D}^{(1,0,\emptyset,0)}_{2n+1}(x)$, 
$\sg_1$ always contributes 
to $\mmp^{(1,0,\emptyset,0)}(\sg)x^{\chi(\sg_1=2n+1)}$ and 
the elements to the right of position $2k$ have no effect 
on whether $\sg_2, \ldots, \sg_{2k-2}$ contribute 
to $\mmp^{(1,0,\emptyset,0)}(\sg)$, it follows 
that  
the contribution of the elements to 
the left of position $2k$ to 
$$\sum_{\sg \in DU_{2n+1}^{(2k)}} x^{\mmp^{(1,0,\emptyset,0)}(\sg)}
x^{\chi(\sg_1=2n+1)}$$ 
is $\overline{D}_{2k-1}(x)$. 
Hence for $n \geq 1$, 
\begin{equation*}\label{Drec1}
\overline{D}^{(1,0,\emptyset,0)}_{2n+1}(x) =  
x \sum_{k=1}^n \binom{2n}{2k-1} \overline{D}^{(1,0,\emptyset,0)}_{2k-1}(x) B_{2n-2k+1}(1)
\end{equation*}
or, equivalently,  
\begin{equation}\label{Drec2}
\frac{\overline{D}^{(1,0,\emptyset,0)}_{2n+1}(x)}{(2n)!} = 
x\sum_{k=1}^n \frac{\overline{D}^{(1,0,\emptyset,0)}_{2k-1}(x)}{(2k-1)!}
\frac{B_{2n-2k+1}(1)}{(2n-2k+1)!}. 
\end{equation}
Note that $\overline{D}^{(1,0,\emptyset,0)}_1(x) =x$. 
Multiplying both sides of (\ref{Drec2}) by $t^{2n}$  and summing 
for $n \geq 1$, we see that 
$$\sum_{n \geq 1} \frac{\overline{D}^{(1,0,\emptyset,0)}_{2n+1}(x)t^{2n}}{(2n)!} -x = 
x\left(\sum_{k \geq 0}  \frac{\overline{D}^{(1,0,\emptyset,0)}_{2n+1}(x)t^{2k}}{(2k)!}\right) 
\left(\sum_{k \geq 0} \frac{B^{(1,0,\emptyset,0)}_{2k+1}(1)t^{2k+1}}{(2k+1)!}\right).$$ 
By (\ref{tan}), 
$$\sum_{k \geq 1}  \frac{B_{2k-1}(1)t^{2k-1}}{(2k-1)!} = \tan(t) 
$$ 
so that 
\begin{equation*}
\frac{\partial}{\partial t} \overline{D}^{(1,0,\emptyset,0)}(t,x) = 
x+ 
x \tan(x) \overline{D}^{(1,0,\emptyset,0)}(t,x).
\end{equation*}
Our initial condition is that $\overline{D}^{(1,0,\emptyset,0)}(0,x) =0$. 
One can easily check that the solution to this differential equation is 
\begin{equation}\label{oDfin}
\overline{D}^{(1,0,\emptyset,0)}(t,x) = x (\sec(t))^x \int_0^t (\cos(z))^xdz.
\end{equation}

As observed above, the difference between 
$D_{2n+1}^{(1,0,\emptyset,0)}(x)$ and 
$\overline{D}_{2n+1}^{(1,0,\emptyset,0)}(x)$ is 
that the permutations $\sg = \sg_1 \ldots \sg_{2n+1} \in DU_{2n+1}$ 
such that $\sg_1 =2n+1$ are weighted differently in that such 
permutations are weighted with an extra power of $x$ in 
$\overline{D}_{2n+1}^{(1,0,\emptyset,0)}(x)$ than they are 
in $D_{2n+1}^{(1,0,\emptyset,0)}(x)$. 
That is, 
$$ x \sum_{\sg \in DU_{2n+1},\sg_1 =2n+1} x^{\mmp^{(1,0,\emptyset,0)}(\sg)}
= \sum_{\sg \in DU_{2n+1},\sg_1 =2n+1} 
x^{\mmp^{(1,0,\emptyset,0)}(\sg)} x^{\chi(\sg_1=2n+1)}.$$ 
It is easy to see 
that 
$$ \sum_{\sg \in DU_{2n+1},\sg_1 =2n+1} x^{\mmp^{(1,0,\emptyset,0)}(\sg)}
= A^{(1,0,\emptyset,0)}_{2n}(x).$$
Thus it follows that 
\begin{equation}\label{DDrec1}
D_{2n+1}^{(1,0,\emptyset,0)}(x) = \overline{D}_{2n+1}^{(1,0,\emptyset,0)}(x)
+ (1-x) A^{(1,0,\emptyset,0)}_{2n}(x).
\end{equation}
Multiplying both sides of (\ref{DDrec1}) by $\frac{t^{2n+1}}{(2n+1)!}$ 
and summing for $n \geq 0$, we see 
that 
$$D^{(1,0,\emptyset,0)}(t,x) = \overline{D}^{(1,0,\emptyset,0)}(t,x) + 
(1-x) \int_0^t 
A^{(1,0,\emptyset,0)}(z,x)dz.$$
Hence, 
\begin{equation*}\label{Dfin}
D^{(1,0,\emptyset,0)}(t,x) = 
x (\sec(t))^x \int_0^t (\cos(z))^x dz+ (1-x) \int_0^t (\sec(z))^x dz.
\end{equation*}

\subsection{The generating function $C^{(1,0,\emptyset,0)}(t,x)$}

 If $\sg = \sg_1 \ldots \sg_{2n} \in DU_{2n}$, then  
$1$ must occur in one of the positions $2,4, \ldots , 2n$.  Let 
$DU_{2n}^{(2k)}$ denote the set of permutations $\sg \in DU_{2n}$ 
such that $\sg_{2k} = 1$.  A schematic diagram of an element
in $DU_{2n}^{(2k)}$ is pictured in Figure \ref{fig:du12n}.

\fig{du12n}{The graph of a $\sg \in DU_{2n}^{(2k)}$.}

Let  
$$C^{(1,0,\emptyset,0)}_{2n}(x,y) = \sum_{\sg \in DU_{2n}} x^{\mmp^{(1,0,\emptyset,0)}(\sg)}y^{\chi(\sg_1 =2n)}.$$
First, we want to study the polynomial 
$\overline{C}^{(1,0,\emptyset,0)}_{2n}(x) =  
C^{(1,0,\emptyset,0)}_{2n+1}(x,x)$. 
As was the case with $D_{2n+1}^{(1,0,\emptyset,0)}(x)$, if 
$\sg = \sg_1 \ldots \sg_{2n} \in DU_{2n}$  and 
$\sg_1 < 2n$, then $\sg_1$ will automatically contribute to 
$\mmp^{(1,0,\emptyset,0)}(\sg)$.  However, if 
$\sg_1 = 2n$, then $\sg_1$ will not contribute to 
$\mmp^{(1,0,\emptyset,0)}(\sg)$. Thus the difference between 
$\overline{C}^{(1,0,\emptyset,0)}_{2n}(x)$ and $C_{2n}^{(1,0,\emptyset,0)}(x)$ is that 
$\sg_1$ always contributes a factor of 
$x$ to $x^{\mmp^{(1,0,\emptyset,0)}(\sg)}x^{\chi(\sg_1 =2n)}$.

First consider $\sg = \sg_1 \ldots \sg_{2n} \in DU_{2n}^{(2n)}$. 
Since $\sg_{2n}=1$, it is easy to see that 
$$\sum_{\sg \in DU_{2n}^{(2n)}} x^{\mmp^{(1,0,\emptyset,0)}(\sg)} 
x^{\chi(\sg_1 =2n)} = \overline{D}^{(1,0,\emptyset,0)}_{2n-1}(x).$$

Next consider a $\sg = \sg_1 \ldots \sg_{2n} \in DU_{2n}^{(2k)}$ 
where $1 \leq  k < n$. 
Note that there are $\binom{2n-1}{2k-1}$ ways 
to pick the elements which occur to the right of position $2k$ in such 
a $\sg$ and there are $C_{2n-2k}(1) = A_{2n-2k}(1)$ ways to order them since 
the elements to the right of position $2k$ form a down-up permutation 
of length $2n-2k$. For a $\sg = \sg_1 \ldots \sg_{2n+1} \in 
DU_{2n}^{(2k)}$, none of the elements $\sg_i$ for $i > 2k$  
matches  $MMP(1,0,\emptyset,0)$ in $\sg$ and 
$\sg_{2k} =1$ always matches   $MMP(1,0,\emptyset,0)$ in $\sg$.
Thus the only other elements of $\sg$ that  
can possibly contribute to 
$\mmp^{(1,0,\emptyset,0)}(\sg)x^{\chi(\sg_1=2n)}$ are the elements 
$\sg_1, \sg_2, \sg_4, \ldots, \sg_{2k-2}$. 
Since in $\overline{D}^{(1,0,\emptyset,0)}_{2n+1}(x)$, 
$\sg_1$ always contributes 
to $\mmp^{(1,0,\emptyset,0)}(\sg)x^{\chi(\sg_1=2n)}$ and 
the elements to the right of position $2k$ have no effect 
on whether $\sg_2, \ldots, \sg_{2k-2}$ contribute 
to $\mmp^{(1,0,\emptyset,0)}(\sg)$, it follows 
that  
the contribution of the elements to 
the left of position $2k$ to  
$\sum_{\sg \in DU_{2n}^{(2k)}} x^{\mmp^{(1,0,\emptyset,0)}(\sg)}
x^{\chi(\sg_1=2n)}$ 
is $\overline{D}_{2k-1}(x)$. 
Hence for $n \geq 1$, 
\begin{equation*}\label{Crec1}
\overline{C}^{(1,0,\emptyset,0)}_{2n}(x) =  \overline{D}_{2n-1}^{(1,0,\emptyset,0)}(x) + 
x \sum_{k=1}^{n-1} \binom{2n-1}{2k-1} \overline{D}^{(1,0,\emptyset,0)}_{2k-1}(x) 
A_{2n-2k}(1)
\end{equation*}
or, equivalently,  
\begin{equation}\label{Crec2}
\frac{\overline{C}^{(1,0,\emptyset,0)}_{2n}(x)}{(2n-1)!} = 
\frac{\overline{D}_{2n-1}^{(1,0,\emptyset,0)}(x)}{(2n-1)!}+
x\sum_{k=1}^{n-1} \frac{\overline{D}^{(1,0,\emptyset,0)}_{2k-1}(x)}{(2k-1)!}
\frac{A_{2n-2k}(1)}{(2n-2k)!}. 
\end{equation}

Multiplying both sides of (\ref{Crec2}) by $t^{2n-1}$ and summing 
for $n \geq 1$, we see that 
\begin{eqnarray*}
\sum_{n \geq 1} \frac{\overline{C}^{(1,0,\emptyset,0)}_{2n}(x) t^{2n-1}}{(2n-1)!} &=& 
\sum_{n \geq 1} \frac{\overline{D}^{(1,0,\emptyset,0)}_{2n-1}(x) t^{2n-1}}{(2n-1)!}
+ \\
&&  x\left(\sum_{k \geq 0}  
\frac{\overline{D}^{(1,0,\emptyset,0)}_{2n+1}(x) t^{2n+1}}{(2n+1)!}\right)  
\left( \sum_{k \geq 1} \frac{A_{2k}(1)t^{2k}}{(2k)!}\right).
\end{eqnarray*}
By (\ref{sec}), 
$$\sum_{k \geq 0}  \frac{A_{2k}(1)t^{2k}}{(2k)!} = \sec(t) 
$$ 
so that 
\begin{eqnarray*}
\frac{\partial}{\partial t} \overline{C}^{(1,0,\emptyset,0)}(t,x) &=&  
\overline{D}^{(1,0,\emptyset,0)}(t,x) +  
x \overline{D}^{(1,0,\emptyset,0)}(t,x)(\sec(t) -1) \\
&=& \overline{D}^{(1,0,\emptyset,0)}(t,x)(1-x +x \sec(t)) \\
&=& x (\sec(t))^x (1-x +x \sec(t))\int_0^t (\cos(z))^x dz.
\end{eqnarray*}
Our initial condition is that $\overline{C}^{(1,0,\emptyset,0)}(0,x) =1$. 
Maple will give a solution to this differential equation, but 
it is a complicated expression which is not particularly useful 
for our purposes so that we will simply record the solution 
to this differential equation as 
\begin{equation}\label{oCfin}
\overline{C}^{(1,0,\emptyset,0)}(t,x) = 1+ 
\int_0^t x (\sec(z))^x (1-x +x \sec(z))\int_0^z (\cos(y))^x dy \ dz.
\end{equation}

As observed above, the difference between 
$C_{2n}^{(1,0,\emptyset,0)}(x)$ and 
$\overline{C}_{2n}^{(1,0,\emptyset,0)}(x)$ is 
that the permutations $\sg = \sg_1 \ldots \sg_{2n+1} \in DU_{2n}$ 
such that $\sg_1 =2n$ are weighted differently in that such 
permutations are weighted with an extra power of $x$ in 
$\overline{C}_{2n}^{(1,0,\emptyset,0)}(x)$ than they are 
in $C_{2n}^{(1,0,\emptyset,0)}(x)$. 
That is, 
$$ x \sum_{\sg \in DU_{2n},\sg_1 =2n} x^{\mmp^{(1,0,\emptyset,0)}(\sg)}
= \sum_{\sg \in DU_{2n},\sg_1 =2n} 
x^{\mmp^{(1,0,\emptyset,0)}(\sg)} x^{\chi(\sg_1=2n)}.$$ 
It is easy to see 
that 
$$ \sum_{\sg \in DU_{2n},\sg_1 =2n} x^{\mmp^{(1,0,\emptyset,0)}(\sg)}
= B^{(1,0,\emptyset,0)}_{2n-1}(x).$$
Thus it follows that 
\begin{equation}\label{CCrec1}
C_{2n}^{(1,0,\emptyset,0)}(x) = \overline{C}_{2n}^{(1,0,\emptyset,0)}(x)
+ (1-x) B^{(1,0,\emptyset,0)}_{2n-1}(x).
\end{equation}
Multiplying both sides of (\ref{CCrec1}) by $\frac{t^{2n}}{(2n)!}$ 
and summing for $n \geq 0$, we see 
that 
$$ C^{(1,0,\emptyset,0)}(t,x) = 
\overline{C}^{(1,0,\emptyset,0)}(t,x) + (1-x) \int_0^t 
B^{(1,0,\emptyset,0)}(z,x)dz.$$
Hence, 
\begin{eqnarray*}\label{Cfin}
C^{(1,0,\emptyset,0)}(t,x) &=& 1+ 
\int_0^t x (\sec(z))^x (1-x +x \sec(z))\int_0^z (\cos(y))^x dy \ dz + \\
&& (1-x) \int_0^t B^{(1,0,\emptyset,0)}(z,x)dz.
\end{eqnarray*}

\section{The coefficients of the polynomials $A^{(1,0,\emptyset,0)}_{2n}(x)$,
\\ 
$B^{(1,0,\emptyset,0)}_{2n+1}(x)$, $C^{(1,0,\emptyset,0)}_{2n}(x)$, and 
$D^{(1,0,\emptyset,0)}_{2n+1}(x)$.}

The main goal of this section is to explain several of the coefficients of the polynomials $A^{(1,0,\emptyset,0)}_{2n}(x)$, $B^{(1,0,\emptyset,0)}_{2n+1}(x)$, $C^{(1,0,\emptyset,0)}_{2n}(x)$, and $D^{(1,0,\emptyset,0)}_{2n+1}(x)$.  
For $n \geq 1$, let $(2n)!! = \prod_{i=1}^n 2i$ and 
 $(2n-1)!! = \prod_{i=1}^n (2i-1)$. 
First it is easy to understand the coefficients of the 
lowest power of $x$ in each of these polynomials. That is, 
we have the following theorem. 

\begin{theorem}\label{lowest}\

\begin{itemize} 
\item[\rm{(1)}] For all $n \geq 1$, 
\begin{equation*} 
A^{(1,0,\emptyset,0)}_{2n}(x)|_{x} = B_{2n-1}(1).
\end{equation*}
\item[\rm{(2)}]  For all $n \geq 1$, 
\begin{equation*} 
B^{(1,0,\emptyset,0)}_{2n+1}(x)|_{x} = A_{2n}(1)+B_{2n-1}(1).\end{equation*}
\item[\rm{(3)}]  For all $n \geq 2$, 
\begin{equation*} 
C^{(1,0,\emptyset,0)}_{2n}(x)|_{x^k} = A_{2n-2}(1)+B_{2n-3}(1).
\end{equation*}
\item[\rm{(4)}]  For all $n \geq 1$, 
\begin{equation*} 
D^{(1,0,\emptyset,0)}_{2n+1}(x)|_{x^k} = B_{2n-1}(1).
\end{equation*}
\end{itemize}

\end{theorem}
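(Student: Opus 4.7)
My plan is to first translate the pattern-matching condition into something elementary: $\sg_i$ matches $MMP(1,0,\emptyset,0)$ in $\sg$ if and only if there is no smaller entry to the left of $\sg_i$ (no points in quadrant III) and there is a larger entry to the right of $\sg_i$ (at least one point in quadrant I). In other words, $\sg_i$ matches exactly when $\sg_i$ is a left-to-right minimum of $\sg$ that is not the maximum of the suffix $\sg_i \sg_{i+1} \ldots \sg_n$. Thus $\mmp^{(1,0,\emptyset,0)}(\sg)$ counts the left-to-right minima of $\sg$ that have a larger entry somewhere to their right.

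Next I would catalogue where left-to-right minima can sit in an alternating permutation. If $\sg_i$ is a left-to-right minimum with $i>1$, then $\sg_{i-1} > \sg_i$, so position $i-1$ must be a descent. In UD permutations descents occur only at even positions, so left-to-right minima live at position $1$ or at odd positions; in DU permutations they live at position $1$ or at even positions. Moreover every such non-terminal LR-min is \emph{immediately} followed by a larger entry by the alternating shape (the ascent at that odd position in UD, or at that even position in DU), so it automatically matches. The two things that can prevent a match are: (a) an LR-min sitting at the very last position (UD$_{2n+1}$ at position $2n+1$, DU$_{2n}$ at position $2n$), and (b) $\sg_1$ being the global maximum.

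With this dictionary, each of the four parts reduces to a short case analysis on the position of $1$ (which is always an LR-min of $\sg$) and, where relevant, on the value of $\sg_1$. For part (1), exactly one match in UD$_{2n}$ forces $\sg_1=1$, leaving $\sg_2\ldots\sg_{2n}$ to be an arbitrary DU permutation of $\{2,\ldots,2n\}$; this contributes $D_{2n-1}(1)=B_{2n-1}(1)$. For part (4), exactly one match in DU$_{2n+1}$ forces $\sg_1=2n+1$ (so $\sg_1$ does not match) and then $\sg_2=1$ (so that the LR-min at position $2$ is the unique match), leaving $\sg_3\ldots\sg_{2n+1}$ to be an arbitrary DU permutation of $\{2,\ldots,2n\}$, contributing $B_{2n-1}(1)$. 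Parts (2) and (3) each split into two branches according to whether $1$ sits at the terminal position (making it a non-matching LR-min) or in the interior; in the interior branch one fixes $\sg_1=1$ (in UD$_{2n+1}$) or $\sg_1=2n$ and $\sg_2=1$ (in DU$_{2n}$), and in the terminal branch one fixes $\sg_1=2$ (in UD$_{2n+1}$) or $\sg_1=2n, \sg_2=2$ (in DU$_{2n}$). Summing the two branches yields $A_{2n}(1)+B_{2n-1}(1)$ for part (2) and $A_{2n-2}(1)+B_{2n-3}(1)$ for part (3).

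The subtle step will be in part (3). A seemingly natural subcase appears to be $\sg_{2n}=1$ with $\sg_1\ne 2n$, in which $\sg_1$ alone would be the unique matching LR-min; this has to be ruled out. The elimination uses the DU shape: $\sg_1>\sg_2$ immediately gives $\sg_2<\sg_1$, so $\sg_1$ cannot be the minimum of $\sg_1,\ldots,\sg_{2n-1}$, contradicting it being the only LR-min apart from $\sg_{2n}=1$. The analogous subcase does not arise in parts (1), (2), (4) because the alternating shape at position $1$ points the other way, so the parallel cases there survive and are exactly what gives the extra summand. Once this one cancellation is verified, every surviving branch collapses to counting an alternating permutation of smaller length, and all four identities follow at once.
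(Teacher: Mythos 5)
Your proposal is correct and follows essentially the same route as the paper's proof: both arguments locate the entry $1$ (which can only sit at the valley positions of the alternating shape), observe that $\sg_1$ in the up-down case and $\sg_2$ in the down-up case always match while $\sg_1$ in the down-up case matches unless it equals the maximum, and then force the surviving configurations ($\sg_1=1$; $\sg_1=1$ or $\sg_1=2$ with $1$ terminal; $\sg_1=2n,\sg_2\in\{1,2\}$; $\sg_1=2n+1,\sg_2=1$) so that each branch reduces to counting a shorter alternating permutation. Your reformulation via left-to-right minima is a clean way to package the paper's case analysis, but it is the same decomposition and yields the same counts.
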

\begin{proof}

For (1), note that if $\sg = \sg_1 \ldots \sg_{2n} \in UD_{2n}$ where 
$n \geq 1$, then 
$\sg_{1}$ always matches $MMP(1,0,\emptyset,0)$ in $\sg$.  
Moreover if $\sg_1 \neq 1$, then $\sg_{2k+1} =1$  for some 
$k \geq 1$ in which case $\sg_{2k+1}$ will also match 
$MMP(1,0,\emptyset,0)$ in $\sg$. Thus the only possible way to 
have $\mmp^{(1,0,\emptyset,0)}(\sg) =1$ is if $\sg_1 =1$ in which case none of 
$\sg_2, \ldots, \sg_{2n}$ will match  $MMP(1,0,\emptyset,0)$ in $\sg$. Clearly in such a situation, $\red[\sg_2 \ldots \sg_{2n}] \in DU_{2n-1}$ so that we have $D_{2n-1}(1) = B_{2n-1}(1)$ ways to choose 
$\sg_2 \ldots \sg_{2n}$. It follows that  
$A^{(1,0,\emptyset,0)}_{2n}(x)|_{x} = B_{2n-1}(1)$ for $n \geq 1$.

For (2), note that if $\sg = \sg_1 \ldots \sg_{2n+1} \in UD_{2n+1}$ 
where $n \geq 1$, then again $\sg_{1}$ always matches $MMP(1,0,\emptyset,0)$ in $\sg$.  
However, in this case, 
if $\sg_1 \neq 1$, then $\sg_{2k+1} =1$  for some $k \geq 1$ in 
which case $\sg_{2k+1}$ will also match 
$MMP(1,0,\emptyset,0)$ in $\sg$ if $1 \leq k < n$, but 
it will not match $MMP(1,0,\emptyset,0)$ in $\sg$ if $k=n$. 
Thus if $\sg = \sg_1 \ldots \sg_{2n+1} \in UD_{2n+1}$  and 
$\mmp^{(1,0,\emptyset,0)}(\sg) =1$, it must be the case that 
$\sg_1 =1$ or $\sg_{2n+1} =1$. 
Now if $\sg_1 =1$, then none of 
$\sg_2, \ldots, \sg_{2n+1}$ will match  $MMP(1,0,\emptyset,0)$ in $\sg$. Clearly in such a situation, $\red[\sg_2 \ldots \sg_{2n+1}] \in DU_{2n}$ so that we have $C_{2n}(1) = A_{2n}(1)$ ways to choose 
$\sg_2 \ldots \sg_{2n+1}$. If $\sg_{2n+1} =1$, then 
it must be the case that $\sg_1 =2$ since otherwise $\sg_{2k+1} =2$ 
for some $k \geq 1$ in which case 
$\sg_1$ and $\sg_{2k+1} =2$ will match  $MMP(1,0,\emptyset,0)$ in $\sg$. But then $\red[\sg_2 \ldots \sg_{2n}] \in DU_{2n-1}$ so that we have $D_{2n-1}(1) = B_{2n-1}(1)$ ways to choose 
$\sg_2 \ldots \sg_{2n}$. Hence  
$B^{(1,0,\emptyset,0)}_{2n+1}(x)|_{x} = A_{2n}(1)+B_{2n-1}(1)$ for $n \geq 1$.

For (3), note that if $\sg = \sg_1 \ldots \sg_{2n} \in DU_{2n}$ where 
$n \geq 2$, then 
$\sg_{2}$ always matches $MMP(1,0,\emptyset,0)$ in $\sg$.  Moreover, 
if $\sg_2 \neq 1$, then $\sg_{2k} =1$ for some $k \geq 2$ in which 
case $\sg_{2k}$ will also match $MMP(1,0,\emptyset,0)$ in $\sg$ for $1 < k < n$ but 
will not match  $MMP(1,0,\emptyset,0)$ in $\sg$ if 
$k=n$.  In addition, $\sg_1$ will match $MMP(1,0,\emptyset,0)$ in $\sg$ unless $\sg_1=2n$. 
Thus if $\sg = \sg_1 \ldots \sg_{2n} \in DU_{2n}$ 
and  $\mmp^{(1,0,\emptyset,0)}(\sg) =1$, then 
we must have $\sg_1 =2n$ and either $\sg_2 =1$ or 
$\sg_{2n} =1$. If $\sg_2 =1$, then none of 
$\sg_3, \ldots, \sg_{2n}$ will match  $MMP(1,0,\emptyset,0)$ in $\sg$. Clearly in such a situation, $\red[\sg_3 \ldots \sg_{2n}] \in DU_{2n-2}$ so that we have $C_{2n-2}(1) = A_{2n-2}(1)$ ways to choose 
$\sg_3 \ldots \sg_{2n}$. If $\sg_{2n} =1$, then 
it must be the case that $\sg_2 =2$ since otherwise $\sg_{2k} =2$ for 
some $k \geq 2$ in which case 
$\sg_2$ and $\sg_{2k} =2$ will match  $MMP(1,0,\emptyset,0)$ in $\sg$. But then $\red[\sg_3 \ldots \sg_{2n-1}] \in DU_{2n-3}$ so that we have $D_{2n-3}(1) = B_{2n-3}(1)$ ways to choose 
$\sg_3 \ldots \sg_{2n-1}$. Hence 
$C^{(1,0,\emptyset,0)}_{2n}(x)|_{x} = A_{2n-2}(1)+B_{2n-3}(1)$ for 
$n \geq 2$.

For (4), note that if $\sg = \sg_1 \ldots \sg_{2n+1} \in DU_{2n+1}$ where 
$n \geq 1$, then 
$\sg_{2}$ always matches $MMP(1,0,\emptyset,0)$ in $\sg$.  
Moreover if $\sg_2 \neq 1$, then $\sg_{2k} =1$ for some 
$k \geq 2$ in which case $\sg_{2k}$ will also match 
$MMP(1,0,\emptyset,0)$ in $\sg$. Finally, $\sg_1$ will also 
match $MMP(1,0,\emptyset,0)$ in $\sg$ unless $\sg_1 =2n+1$. 
Thus if $\sg = \sg_1 \ldots \sg_{2n+1} \in DU_{2n+1}$ 
and $\mmp^{(1,0,\emptyset,0)}(\sg) =1$, it must be 
the case that $\sg_1 =2n+1$ and $\sg_2 =1$ in which case none of 
$\sg_3, \ldots, \sg_{2n+1}$ will match  $MMP(1,0,\emptyset,0)$ in $\sg$. Clearly in such a situation, $\red[\sg_3 \ldots \sg_{2n}] \in DU_{2n-1}$ so that we have $D_{2n-1}(1) = B_{2n-1}(1)$ ways to choose 
$\sg_3 \ldots \sg_{2n+1}$. It follows that  
$D^{(1,0,\emptyset,0)}_{2n+1}(x)|_{x} = B_{2n-1}(1)$ for $n \geq 1$.
\end{proof}

We can also explain the coefficients of the highest power of 
$x$ in each of the polynomials $A_{2n}(x)$, $B_{2n+1}(x)$,  
and $D_{2n+1}(x)$.  That is, we have the following theorem.

\begin{theorem}\label{highest}\

\begin{itemize}
\item[\rm{(1)}] For all $n \geq 1$, the highest power of $x$ that appears in 
$A^{(1,0,\emptyset,0)}_{2n}(x)$ is $x^{n}$ which appears with coefficient 
$(2n-1)!!$. 

\item[\rm{(2)}]  For all $n \geq 1$, the highest power of $x$ that appears in 
$B^{(1,0,\emptyset,0)}_{2n+1}(x)$ is $x^{n}$ which appears with coefficient 
$(n+1)((2n-1)!!)$.

\item[\rm{(3)}]  For all $n \geq 2$, the highest power of $x$ that appears in 
$C^{(1,0,\emptyset,0)}_{2n}(x)$ is $x^{n}$ which appears with coefficient 
$(2n^2-n-1)((2n-4)!!)-n((2n-3)!!)$.

\item[\rm{(4)}]  For all $n \geq 2$, the highest power of $x$ that appears in 
$D^{(1,0,\emptyset,0)}_{2n-1}(x)$ is $x^{n+1}$ which appears with coefficient 
$(2n)!! -(2n-1)!!$.
\end{itemize}
\end{theorem}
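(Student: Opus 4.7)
The plan is to determine, for each of the four polynomials, exactly which alternating permutations attain the stated maximum pattern count, and then enumerate them by a careful case analysis on the position of the smallest entry $1$.

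For part (1), a structural analysis shows that in $\sg \in UD_{2n}$ only the valleys $\sg_1, \sg_3, \ldots, \sg_{2n-1}$ can match $MMP(1,0,\emptyset,0)$, and each valley automatically has a larger element to its right (its following peak). Hence the maximum count $n$ is attained iff $\sg_1 > \sg_3 > \cdots > \sg_{2n-1}$. I would establish a bijection between such $\sg$ and perfect matchings of $[2n]$: send $\sg$ to $\{\{\sg_{2k-1}, \sg_{2k}\} : 1 \le k \le n\}$. The reverse map is forced because the pairs must be listed in decreasing order of their smaller entries, and the remaining UD inequalities $\sg_{2k} > \sg_{2k+1}$ then follow automatically from $\sg_{2k} > \sg_{2k-1} > \sg_{2k+1}$. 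This yields $(2n-1)!!$. Part (2) uses the same characterization (the max is still $n$, since $\sg_{2n+1}$ has nothing to its right) combined with a case split on whether $1$ sits at position $2n+1$ or at $2n-1$; the two cases contribute $(2n-1)!!$ (by part (1) applied to the first $2n$ entries) and $\binom{2n}{2}(2n-3)!!$ (a descending pair $(\sg_{2n}, \sg_{2n+1})$ chosen freely on top of a shorter part-(1) subpermutation), summing to $(n+1)(2n-1)!!$.

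For parts (3) and (4), the parallel analysis shows that in $\sg \in DU_{2n+1}$ (resp.\ $DU_{2n}$) the positions that can match are $\sg_1$ (only when $\sg_1$ is not the global maximum) together with the valleys $\sg_2, \sg_4, \ldots$; in the even-length case the last valley $\sg_{2n}$ cannot match because it has no successor. The maximizing conditions are therefore $\sg_1 \neq 2n+1$ and $\sg_2 > \sg_4 > \cdots > \sg_{2n}$ for (4), and $\sg_1 \neq 2n$ and $\sg_2 > \sg_4 > \cdots > \sg_{2n-2}$ for (3). To enumerate these I would introduce the auxiliary quantity
\[
M(2m+1) = |\{\tau \in DU_{2m+1} : \tau_2 > \tau_4 > \cdots > \tau_{2m}\}|
\]
and prove $M(2m+1) = (2m)!!$ by induction: the decreasing chain forces $\tau_{2m} = 1$, so after removing this entry the reduced permutation is a DU of length $2m-1$ satisfying the analogous $M$-condition, while $\tau_{2m+1}$ can be any of the $2m$ remaining values, giving $M(2m+1) = 2m \cdot M(2m-1)$.

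Part (4) now follows by subtracting from $M(2n+1)$ the number of $\sg$ with $\sg_1 = 2n+1$; deleting $\sg_1$ there reduces to the part-(1) condition on $UD_{2n}$, contributing $(2n-1)!!$, so the leading coefficient is $(2n)!! - (2n-1)!!$. Part (3) likewise subtracts the $\sg_1 = 2n$ count, which after deleting $\sg_1$ reduces to the part-(2) condition on $UD_{2n-1}$, contributing $n(2n-3)!!$. The unrestricted count $N$ of $\sg \in DU_{2n}$ with $\sg_2 > \cdots > \sg_{2n-2}$ requires one final case split on the position of $1$: the decreasing chain forces $1$ into position $2n$ or $2n-2$. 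The first case reduces directly to $M(2n-1) = (2n-2)!!$; the second splits off an independent descending pair $(\sg_{2n-1}, \sg_{2n})$ chosen from $\binom{2n-1}{2}$ value-pairs on top of an $M(2n-3) = (2n-4)!!$ subpermutation, contributing $\binom{2n-1}{2}(2n-4)!!$. The sum simplifies algebraically to $(2n^2 - n - 1)(2n-4)!!$, giving the claimed leading coefficient. The main subtlety is the last case analysis: one must verify that when $\sg_{2n-2} = 1$ is fixed, the DU inequalities at the seam ($\sg_{2n-3} > 1$ and $1 < \sg_{2n-1}$) are automatic, so that the prefix and the trailing pair can indeed be counted independently.
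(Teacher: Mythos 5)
Your proposal is correct, and for parts (1), (3) and (4) it takes a genuinely different route from the paper. Both arguments start from the same structural facts (only the odd positions of a $UD$ permutation, resp.\ $\sg_1$ and the even positions $\sg_2,\ldots$ of a $DU$ permutation, can match $MMP(1,0,\emptyset,0)$, and the maximum is attained exactly when the relevant valleys form a decreasing chain, together with $\sg_1$ not being the global maximum in the $DU$ cases). From there the paper proves (1) by induction on $n$, peeling off the last ascent pair, whereas you give a direct bijection with perfect matchings of $[2n]$; these are really the same count, but your bijection makes the $(2n-1)!!$ transparent without induction. The more substantive divergence is in (3) and (4): the paper runs an induction with a case analysis on the last two or three entries, routed through the auxiliary weighted polynomial $\overline{D}^{(1,0,\emptyset,0)}_{2n+1}(x)$ and its top coefficient $(2n)!!$ (Lemma \ref{BarDmax}), while you count all chain permutations via your quantity $M(2m+1)=(2m)!!$ (which is exactly the content of Lemma \ref{BarDmax}) and then subtract the permutations with $\sg_1$ maximal by deleting $\sg_1$ and observing that what remains is precisely a part-(1) maximizer of $UD_{2n}$ (for $D_{2n+1}$) or a part-(2) maximizer of $UD_{2n-1}$ (for $C_{2n}$). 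This inclusion--exclusion avoids the paper's induction on the $D$ statement itself and its three-case analysis for $C_{2n}$, and the reduction of the $\sg_1=2n$ case to part (2) is a clean observation not made in the paper; the price is that you must (as you note) verify the seam inequalities so that the prefix and the trailing pair really are independent, a point the paper's entry-by-entry case analysis handles implicitly. The arithmetic in both routes agrees with the stated coefficients and with the tables.
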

\begin{proof}
For (1), we proceed by induction on $n$. Clearly the formula holds 
for $n=1$ since $A^{(1,0,\emptyset,0)}_2(x) = x$. Thus assume 
that $n > 1$ and that by induction, we know that 
$A^{(1,0,\emptyset,0)}_{2n-2}(x)|_{x^{n-1}} = (2n-3)!!$. 
It is easy to see  that the maximum that 
$\mmp^{(1,0,\emptyset,0)}(\sg)$ can be is $n$ since for 
any $\sg = \sg_1 \ldots \sg_{2n} \in UD_{2n}$ only 
$\sg_1,\sg_3, \ldots, \sg_{2n-1}$ can match $MMP(1,0,\emptyset,0)$ in $\sg$.  If $\sg_{2k+1} =1$ for some $k < n-1$, then 
$\sg_{2k+3}, \ldots, \sg_{2n-1}$ will not match 
$MMP(1,0,\emptyset,0)$ in $\sg$. Thus if 
$\sg = \sg_1 \ldots \sg_{2n} \in UD_{2n}$ is such that 
$\mmp^{(1,0,\emptyset,0)}(\sg) =n$, then 
$\sg_{2n-1} =1$ and $\mmp^{(1,0,\emptyset,0)}(\sg_1 \ldots \sg_{2n-2}) =n-1$.
We then  have $(2n-1)$ ways to choose the value $\sg_{2n}$ and, once 
we have chosen the value of $\sg_{2n}$, we have 
$(2n-3)!!$ ways to choose $\sg_1 \ldots \sg_{2n-2}$. 
Hence $A^{(1,0,\emptyset,0)}(x)|_{x^n} = (2n-1)!!$.

For (2), it is easy 
to see that our formula holds for $n=1$ and $n=2$ since 
$B^{(1,0,\emptyset,0)}_3(x)|_x =2$ and 
$B^{(1,0,\emptyset,0)}_5(x)|_{x^2} =9 = 3(3!!)$.
So assume that $n \geq 3$ and 
suppose that $\sg = \sg_1 \ldots \sg_{2n+1} \in UD_{2n+1}$. 
Then only $\sg_1,\sg_3, \ldots, \sg_{2n-1}$ can match the 
 $MMP(1,0,\emptyset,0)$ in $\sg$. 
Thus the maximum that $\mmp^{(1,0,\emptyset,0)}(\sg)$ can be is $n$. 
Note that if $\sg_{2k+1} =1$ where $0 \leq k < n-1$, then 
none of $\sg_{2j+1}$ for $j > k$ will match  the 
 $MMP(1,0,\emptyset,0)$ in $\sg$. It follows 
that if $\mmp^{(1,0,\emptyset,0)}(\sg) =n$, then it must be 
the case that $\sg_{2n+1}=1$ or $\sg_{2n-1}=1$. Now if 
$\sg_{2n-1}=1$, then we have $\binom{2n}{2}$ ways to choose 
the values of $\sg_{2n}$ and $\sg_{2n+1}$ and it 
must be the case that $\red[\sg_1 \ldots \sg_{2n-2}] =\tau$ where 
$\tau \in UD_{2n-2}$ and $\mmp^{(1,0,\emptyset,0)}(\tau) =n-1$.  
It then follows from part (1) that we have $(2n-3)!!$ ways to 
choose  $\sg_1 \ldots \sg_{2n-2}$ so 
that the set of permutations 
$\sg = \sg_1 \ldots \sg_{2n+1} \in UD_{2n+1}$ with $\sg_{2n-1} =1$ 
contributes $\binom{2n}{2}(2n-3)!! = n ((2n-1)!!)$ to 
$B^{(1,0,\emptyset,0)}_{2n+1}(x)|_{x^n}$.  If $\sg_{2n+1} = 1$, then 
it must be the case that $\sg_{2n-1}=2$ and 
$\red[\sg_1 \ldots \sg_{2n-2}] =\tau$ where 
$\tau \in UD_{2n-2}$ and $\mmp^{(1,0,\emptyset,0)}(\tau) =n-1$. Thus 
we have $2n-1$ choices for the value of $\sg_{2n}$ and then as before 
we have $(2n-3)!!$ to choose $\sg_1 \ldots \sg_{2n-2}$. 
Thus the set of permutations 
$\sg = \sg_1 \ldots \sg_{2n+1} \in UD_{2n+1}$ with $\sg_{2n+1} =1$ 
contributes $(2n-1)!!$ to 
$B^{(1,0,\emptyset,0)}_{2n+1}(x)|_{x^n}$. Hence 
 $B^{(1,0,\emptyset,0)}_{2n+1}(x)|_{x^n} = (n+1)(2n-1)!!$.

For (4), it is easy to see  that if $\sg = \sg_1 \ldots \sg_{2n+1} \in 
DU_{2n+1}$, then only $\sg_1,\sg_2, \sg_4, \ldots, \sg_{2n}$ can 
match  
$MMP(1,0,\emptyset,0)$ in $\sg$. Thus 
$\mmp^{(1,0,\emptyset,0)}(\sg)$ is at most $n+1$. It is also easy to see 
that if $\sg_{2k} = 1$ for $k <n$, then $\sg_{2k+2}, \ldots, \sg_{2n}$ will not match  
$MMP(1,0,\emptyset,0)$ in $\sg$ so that if 
$\mmp^{(1,0,\emptyset,0)}(\sg) =n+1$,
then it must be the case that $\sg_{2n} =1$. 
Thus assume that 
$\sg = \sg_1 \ldots \sg_{2n+1} \in 
DU_{2n+1}$ is such that $\mmp^{(1,0,\emptyset,0)}(\sg)=n+1$. 
Since $\sg_{2n}=1$, 
we have two cases.\\

\noindent
{\bf Case 1.} $\sg_{2n+1} = 2n+1$. In this case, we know that $\sg_1$ will always match 
 $MMP(1,0,\emptyset,0)$ in $\sg$. 
Thus as far as $\sg_1 \ldots \sg_{2n-1}$ is concerned,  
we are dealing with the polynomial 
$\overline{D}^{(1,0,\emptyset,0)}_{2n-1}(x)$. We then 
have the following lemma.

\begin{lemma}\label{BarDmax} For $n \geq 1$, the highest power of 
$x$ which occurs in $\overline{D}^{(1,0,\emptyset,0)}_{2n+1}(x)$ is $x^{n+1}$ 
which occurs 
with a coefficient of $(2n)!!$.
\end{lemma}  
\begin{proof}
We proceed by induction on $n$. Our theorem holds 
for $n=1$ since $\overline{D}^{(1,0,\emptyset,0)}_3(x) = 2x^2$. 
Now assume that $n > 1$ and the lemma holds for $n-1$.  As in our 
discussion for $D_{2n+1}(x)$, if $\sg = \sg_1 \ldots \sg_{2n+1} \in 
DU_{2n+1}$ is such that 
$\chi(\sg_1 =2n+1) + \mmp^{(1,0,\emptyset,0)}(\sg)=n+1$, 
then it must be the case that $\sg_{2n}=1$.  But then we have 
$2n$ choices for $\sg_{2n+1}$ and, once we have chosen 
$\sg_{2n+1}$, then $\tau = \red[\sg_1 \ldots \sg_{2n-1}]$ must 
be an element of $DU_{2n-1}$ such that  
$\chi(\tau_1 =2n-1) + \mmp^{(1,0,\emptyset,0)}(\tau)=n$. By induction, 
we have $(2(n-1))!!$ ways to pick $\sg_1 \ldots \sg_{2n-1}$. 
Thus  $\overline{D}^{(1,0,\emptyset,0)}_{2n+1}(x)|_{x^n} = (2n)!!$. 
\end{proof}

It follows that in Case 1, we have $(2n-2)!!$ ways to pick 
$\sg_1 \ldots \sg_{2n-1}$ so that the permutations 
such that $\sg_{2n}=1$ and $\sg_{2n+1} = 2n+1$ contribute 
$(2n-2)!!$ to $D^{(1,0,\emptyset,0)}_{2n+1}(x)|_{x^{n+1}}$.\\

\noindent {\bf Case 2.} $\sg_{2n+1} < 2n+1$.  In this case, $\tau = \red[\sg_1 \ldots \sg_{2n-1}]$ must 
be an element of $DU_{2n-1}$ such that  
$\mmp^{(1,0,\emptyset,0)}(\tau)=n$. It then follows 
by induction that we have 
$(2n-1)$ ways to pick $\sg_{2n+1}$ and, once we have chosen $\sg_{2n+1}$, we 
have $(2n-2)!!-(2n-3)!!$ ways to pick $\sg_1 \ldots \sg_{2n-1}$. 
Hence the permutations 
such that $\sg_{2n}=1$ and $\sg_{2n+1} < 2n+1$ contribute 
$(2n-1)((2n-2)!!-(2n-3)!!) = (2n-1)((2n-2)!!) - (2n-1)!!$ to $D^{(1,0,\emptyset,0)}_{2n+1}(x)|_{x^{n+1}}$.\\
\ \\
Thus 
\begin{eqnarray*}
D^{(1,0,\emptyset,0)}_{2n+1}(x)|_{x^{n+1}} &=& (2n-2)!! + (2n-1)((2n-2)!!) - (2n-1)!! \\
&=& (2n)!! - (2n-1)!!.
\end{eqnarray*}

For (3), observe that if $\sg = \sg_1 \ldots \sg_{2n} \in DU_{2n}$, 
then only $\sg_1, \sg_2, \sg_4, \ldots \sg_{2n-2}$ can 
match  $MMP(1,0,\emptyset,0)$ so that 
$\mmp^{(1,0,\emptyset,0)}(\sg)$ is at most $n$. It is also easy to see 
that if $\sg_{2k} = 1$ for $k <n-1$, then $\sg_{2k+2}, \ldots, \sg_{2n-2}$ will not match  
$MMP(1,0,\emptyset,0)$ in $\sg$ so that if $\mmp^{(1,0,\emptyset,0)}(\sg) =n$,
then it must be the case that $\sg_{2n} =1$ or $\sg_{2n-2} =1$. 
Suppose that $\sg = \sg_1 \ldots \sg_{2n} \in DU_{2n}$ and 
$\mmp^{(1,0,\emptyset,0)}(\sg) =n$. 
We then have three cases.\\

\noindent
{\bf Case I.} $\sg_{2n}=1$. In this case, it must be that 
$\tau = \red[\sg_1 \ldots \sg_{2n-1}] \in DU_{2n-1}$ and 
$\mmp^{(1,0,\emptyset,0)}(\tau) =n$. Thus by part (3), we 
have $(2n-2)!!-(2n-3)!!$ choices for $\sg_1 \ldots \sg_{2n}$.\\

\noindent
{\bf Case II.} $\sg_{2n-2} =1$ and $\sg_{2n-1} =2n$. In this case, we have $(2n-2)$ choices for $\sg_{2n}$. The fact 
that $\sg_{2n-1}=2n$ implies that $\sg_1$ will always match 
 $MMP(1,0,\emptyset,0)$ so 
that $\gamma = \red[\sg_1 \ldots \sg_{2n-3}]$ is a permutation 
in $DU_{2n-3}$ such that 
$\chi(\sg_1 =2n-3) + \mmp^{(1,0,\emptyset,0)}(\gamma)=n-1$. 
By Lemma \ref{BarDmax}, we will have $(2n-4)!!$ choices 
for $\sg_1 \ldots \sg_{2n-3}$ once we have chosen $\sg_{2n}$. 
Thus the permutations in Case II will contribute 
$(2n-2)!!$ to $C^{(1,0,\emptyset,0)}_{2n}(x)|_{x^n}$. \\

\noindent
{\bf Case III.} $\sg_{2n-2} =1$ and $\sg_{2n-1} < 2n$. In this case, $\tau = \red[\sg_1 \ldots \sg_{2n-3}]$ must 
be an element of $DU_{2n-3}$ such that  
$\mmp^{(1,0,\emptyset,0)}(\tau)=n-1$. Then we have 
$\binom{2n-2}{2}$ ways to pick $\sg_{2n-1}$ and $\sg_{2n}$ and once we have chosen $\sg_{2n-1}$ and $\sg_{2n}$, we 
have $(2n-4)!!-(2n-5)!!$ ways to pick $\sg_1 \ldots \sg_{2n-3}$ by part (4). 
It follows that the permutations in Case III
contribute 
$\binom{2n-2}{2}((2n-4)!!-(2n-5)!!)$ to 
$C^{(1,0,\emptyset,0)}_{2n}(x)|_{x^n}$.\\
\ \\
Thus 
\begin{eqnarray*} 
C^{(1,0,\emptyset,0)}_{2n}(x)|_{x^n} &=& (2n-2)!!-(2n-3)!! + (2n-2)!!+ 
\binom{2n-2}{2} ((2n-4)!!-(2n-5)!!) \\
&=& 2((2n-2)!!)-(2n-3)!! + (n-1)(2n-3)((2n-4)!!-(2n-5)!!) \\
&=& (2(2n-2)+(n-1)(2n-3))((2n-4)!!) -n((2n-3)!!) \\
&=& (2n^2 -n-1)((2n-4)!!)  -n((2n-3)!!).
\end{eqnarray*}

\end{proof}

Next we give formulas for the coefficient of 
$x^2$ in the polynomials $A^{(1,0,\emptyset,0)}_{2n}(x)$, 
$B^{(1,0,\emptyset,0)}_{2n+1}(x)$, $C^{(1,0,\emptyset,0)}_{2n}(x)$, 
and $B^{(1,0,\emptyset,0)}_{2n+1}(x)$. None of the corresponding 
sequences had previously appeared in the OEIS \cite{oeis}. 

\begin{theorem} \

\begin{itemize}
\item[\rm{(1)}] For $n \geq 2$,
\begin{equation*} 
A^{(1,0,\emptyset,0)}_{2n}(x)|_{x^2} = 
\sum_{k=1}^{n-1} \binom{2n-1}{2k} B_{2k-1}(1) B_{2n-2k-1}(1).
\end{equation*}
\item[\rm{(2)}]  For $n \geq 3$,
\begin{equation*} 
B^{(1,0,\emptyset,0)}_{2n+1}(x)|_{x^2} = A^{(1,0,\emptyset,0)}_{2n}(x)|_{x^2}
+ \sum_{k=1}^{n-1} \binom{2n}{2k} B_{2k-1}(1) A_{2n-2k}(1).
\end{equation*}
\item[\rm{(3)}]  For $n \geq 2$,
\begin{equation*} 
D^{(1,0,\emptyset,0)}_{2n+1}(x)|_{x^2} = (2n-1)B_{2n-1}(1) + 
\sum_{k=2}^{n-1} \binom{2n-1}{2k-2} B_{2k-3}(1) B_{2n-2k+1}(1).
\end{equation*}
\item[\rm{(4)}]  For $n \geq 2$, 
\begin{eqnarray*} 
C^{(1,0,\emptyset,0)}_{2n}(x)|_{x^2} &=& D^{(1,0,\emptyset,0)}_{2n-1}(x)|_{x^2}
+(2n-2)A_{2n-2}(1) + \\
&&
\sum_{k=2}^{n-1} \binom{2n-2}{2k-2} B_{2k-3}(1) A_{2n-2k}(1). \nonumber 
\end{eqnarray*}
\end{itemize}
\end{theorem}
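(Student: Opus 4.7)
My plan is to extract the coefficient of $x^2$ directly from the four recursions derived in Section 2, substituting the values of the coefficient of $x$ supplied by Theorem \ref{lowest} and by the parts already proven.

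For part (1), the recursion established just before (\ref{Arec2}) expresses $A^{(1,0,\emptyset,0)}_{2n}(x)$ as $x$ times a linear combination of the $A^{(1,0,\emptyset,0)}_{2k}(x)$, so $[x^2]A^{(1,0,\emptyset,0)}_{2n}(x)$ is a weighted sum of $[x]A^{(1,0,\emptyset,0)}_{2k}(x)$. This quantity is zero when $k=0$ and equals $B_{2k-1}(1)$ for $k \geq 1$ by Theorem \ref{lowest}(1), which gives the formula. Part (2) is analogous: the recursion before (\ref{Brec2}) contributes $[x^2]A^{(1,0,\emptyset,0)}_{2n}(x)$ (just computed) plus a convolution of the form claimed, again using Theorem \ref{lowest}(1).

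Parts (3) and (4) require the auxiliary polynomials $\overline{D}$ and $\overline{C}$. The key observation is that the recursion before (\ref{Drec2}) carries an explicit factor of $x$, and the base case is $\overline{D}^{(1,0,\emptyset,0)}_1(x) = x$; by a straightforward induction on $n$, every $\overline{D}^{(1,0,\emptyset,0)}_{2k-1}(x)$ is divisible by $x^2$ for $k \geq 2$, so $[x]\overline{D}^{(1,0,\emptyset,0)}_{2k-1}(x) = 0$ in that range. Extracting $[x^2]$ from the $\overline{D}$-recursion thus keeps only the $k=1$ summand and yields $[x^2]\overline{D}^{(1,0,\emptyset,0)}_{2n+1}(x) = 2nB_{2n-1}(1)$. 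Applying the same idea to the recursion before (\ref{Crec2}) produces $[x^2]\overline{C}^{(1,0,\emptyset,0)}_{2n}(x) = [x^2]\overline{D}^{(1,0,\emptyset,0)}_{2n-1}(x) + (2n-1)A_{2n-2}(1)$.

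To finish, I would use the bridge identities (\ref{DDrec1}) and (\ref{CCrec1}), which give the decomposition $[x^2]D^{(1,0,\emptyset,0)}_{2n+1}(x) = [x^2]\overline{D}^{(1,0,\emptyset,0)}_{2n+1}(x) + [x^2]A^{(1,0,\emptyset,0)}_{2n}(x) - [x]A^{(1,0,\emptyset,0)}_{2n}(x)$ and the analogous decomposition for $C$ in terms of $\overline{C}$ and $B$. Substituting the values obtained above, together with Theorem \ref{lowest} and parts (1)--(2), and then reindexing the resulting convolution sums by $j \leftrightarrow k-1$ to match the stated formulas, yields parts (3) and (4). The entire argument is algebraic; the only real care is bookkeeping the three contributions to $x^2$ in each bridge identity and reindexing binomial coefficients, and the single combinatorial input --- that $\overline{D}^{(1,0,\emptyset,0)}_{2k-1}(x)$ is divisible by $x^2$ for $k \geq 2$ --- falls out immediately from the recursive structure.
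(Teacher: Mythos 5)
Your proof is correct, but it takes a genuinely different route from the paper's. The paper argues combinatorially from scratch: for each family it classifies the permutations $\sg$ with $\mmp^{(1,0,\emptyset,0)}(\sg)=2$ by the position of $1$ (together with forced values of $\sg_1$ or $\sg_2$, e.g.\ $\sg_1=2n+1$ in the $DU_{2n+1}$ case), and counts each class using Theorem \ref{lowest}. You instead extract the coefficient of $x^2$ mechanically from the recursions of Section 2 and the bridge identities (\ref{DDrec1}) and (\ref{CCrec1}); the one structural input you need, that $\overline{D}^{(1,0,\emptyset,0)}_{2k-1}(x)$ is divisible by $x^2$ for $k\geq 2$ while $\overline{D}^{(1,0,\emptyset,0)}_{1}(x)=x$, does follow immediately from the explicit factor of $x$ in the recursion before (\ref{Drec2}), so that only the $k=1$ summand survives and $[x^2]\overline{D}^{(1,0,\emptyset,0)}_{2n+1}(x)=2nB_{2n-1}(1)$, $[x^2]\overline{C}^{(1,0,\emptyset,0)}_{2n}(x)=[x^2]\overline{D}^{(1,0,\emptyset,0)}_{2n-1}(x)+(2n-1)A_{2n-2}(1)$ as you claim. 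I verified that your resulting expressions, e.g.\ $D^{(1,0,\emptyset,0)}_{2n+1}(x)|_{x^2}=(2n-1)B_{2n-1}(1)+A^{(1,0,\emptyset,0)}_{2n}(x)|_{x^2}$, agree numerically with the tables and reindex onto the stated convolutions. Your approach buys uniformity and less case analysis, and it makes the structural relations between the four coefficient sequences transparent; the paper's approach buys a direct combinatorial interpretation of each summand. One caution for your final bookkeeping step: when you reindex $A^{(1,0,\emptyset,0)}_{2n}(x)|_{x^2}$ into part (3) you will obtain the sum $\sum_{k=2}^{n}\binom{2n-1}{2k-2}B_{2k-3}(1)B_{2n-2k+1}(1)$, with upper limit $n$ rather than the $n-1$ printed in the statement; the printed limit is a typo (for $n=2$ the stated right-hand side gives $6$ while $D^{(1,0,\emptyset,0)}_{5}(x)|_{x^2}=9$, and the paper's own proof sums over $2\leq k\leq n$), so do not be alarmed when your answer carries the extra term.
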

\begin{proof}

For (1), suppose that $\sg = \sg_1 \ldots \sg_{2n} \in UD_{2n}$ and 
$\mmp^{(1,0,\emptyset,0)}(\sg) =2$. Then it cannot be 
that $\sg_1=1$ since that would force that $\sg_2, \ldots, \sg_{2n}$ 
do not match  $MMP(1,0,\emptyset,0)$ in $\sg$. 
Thus $1 \in \{\sg_{2k+1}: k =1, \ldots ,n-1\}$. 
Now suppose that  
$\sg_{2k+1} =1$ where $1 \leq k \leq n-1$. Then $\sg_{2k+1}$ 
will match  $MMP(1,0,\emptyset,0)$ in $\sg$ and 
$\sg_{2k+2}, \ldots, \sg_{2n}$ will 
not match  $MMP(1,0,\emptyset,0)$ in $\sg$. 
Hence it must be the case that 
$\tau = \red[\sg_1 \ldots \sg_{2k}]$ is a permutation in $UD_{2k}$ 
such that $\mmp^{(1,0,\emptyset,0)}(\tau) =1$.  Thus we have 
$\binom{2n-1}{2k}$ ways to choose the set of elements for 
$\sg_1, \ldots, \sg_{2k}$ and, by Theorem \ref{lowest}, we 
have $B_{2k-1}(1)$ ways to order them.  We also 
have $B_{2n-2k-1}(1)$ ways to order $\sg_{2k+2} \ldots \sg_{2n}$. 
Hence 
$$A^{(1,0,\emptyset,0)}_{2n}(x)|_{x^2} = 
\sum_{k=1}^{n-1} \binom{2n-1}{2k} B_{2k-1}(1) B_{2n-2k-1}(1).$$

The argument for (2) is similar.  That is, suppose $\sg = \sg_1 \ldots \sg_{2n+1} \in UD_{2n+1}$ and $\mmp^{(1,0,\emptyset,0)}(\sg) =2$. Then 
again we cannot have $\sg_1 =1$.  
Thus $1 \in \{\sg_{2k+1}: k =1, \ldots ,n\}$. 
Now suppose 
$\sg_{2k+1} =1$ where $1 \leq k \leq n-1$. Then $\sg_{2k+1}$ 
will match  $MMP(1,0,\emptyset,0)$ in $\sg$ and 
$\sg_{2k+2}, \ldots, \sg_{2n+1}$ will 
not match  $MMP(1,0,\emptyset,0)$ in $\sg$. 
Hence it must be the case that 
$\tau = \red[\sg_1 \ldots \sg_{2k}]$ is a permutation in $UD_{2k}$ 
such that $\mmp^{(1,0,\emptyset,0)}(\tau) =1$.  Thus we have 
$\binom{2n}{2k}$ ways to choose the set of elements for 
$\sg_1, \ldots, \sg_{2k}$ and, by Theorem \ref{lowest}, we 
have $B_{2k-1}(1)$ ways to order them.  We also 
have $A_{2n-2k}(1)$ ways to order $\sg_{2k+2} \ldots \sg_{2n+1}$. 
However if $\sg_{2n+1} = 1$, then $\sg_{2n+1}$ does 
not match  $MMP(1,0,\emptyset,0)$ in $\sg$ so 
that it must be the case that 
$\alpha = \red[\sg_1 \ldots \sg_{2n}]$ is an element of 
$UD_{2n}$ such that  $\mmp^{(1,0,\emptyset,0)}(\tau) =2$. It follows 
that in this case, we have 
$A^{(1,0,\emptyset,0)}_{2n}(x)|_{x^2}$ ways to choose 
$\sg_1 \ldots \sg_{2n}$. 
Hence 
$$B^{(1,0,\emptyset,0)}_{2n+1}(x)|_{x^2} = 
A^{(1,0,\emptyset,0)}_{2n}(x)|_{x^2} +
\sum_{k=1}^{n-1} \binom{2n}{2k} B_{2k-1}(1) A_{2n-2k}(1).$$

For part (3), suppose that 
$\sg = \sg_1 \ldots \sg_{2n+1} \in DU_{2n+1}$ and 
$\mmp^{(1,0,\emptyset,0)}(\sg) =2$.  Then 
$1 \in \{\sg_2,\sg_4, \ldots, \sg_{2n}\}$.  Now if 
$\sg_2 =1$, then we cannot have 
$\sg_1 = 2n+1$ because that would force 
$\mmp^{(1,0,\emptyset,0)}(\sg) =1$. Thus if 
$\sg_2 =1$, then $2\leq \sg_1 \leq 2n$ in which 
case $\sg_1$ and $\sg_2$ will be the only two elements 
of $\sg$ to match  
 $MMP(1,0,\emptyset,0)$ in $\sg$.
We then have $D_{2n-1}(1) = B_{2n-1}(1)$ ways to 
pick $\sg_3 \ldots \sg_{2n+1}$ as 
$\red[\sg_3 \ldots \sg_{2n+1}] \in DU_{2n-1}$. Thus the 
number of $\sg = \sg_1 \ldots \sg_{2n+1} \in DU_{2n+1}$ such that $\mmp^{(1,0,\emptyset,0)}(\sg) =2$ and $\sg_2 =1$ is 
$(2n-1)B_{2n-1}(1)$. Next assume that 
$\sg_{2k} =1$ where $2 \leq k \leq n$. Then $\sg_{2k}$ matches   
 $MMP(1,0,\emptyset,0)$ in $\sg$. It follows 
that we cannot have $2n+1 \in \{\sg_3, \ldots \sg_{2n+1}\}$ since 
otherwise $\sg_1$ and $\sg_2$ would also match  
 $MMP(1,0,\emptyset,0)$ in $\sg$ which would 
force $\mmp^{(1,0,\emptyset,0)}(\sg) \geq 3$. Thus it 
must be the case that $\sg_1 =2n+1$. Moreover, 
if $s = \min(\{\sg_2, \ldots, \sg_{2k-1}\})$, then it 
must be the case that $\sg_2 =s$ since otherwise 
$s = \sg_{2j}$ for some $2 \leq j \leq k-1$ in which 
case both $\sg_2$ and $\sg_{2j}$ would  match  
 $MMP(1,0,\emptyset,0)$ in $\sg$ which would 
mean $\mmp^{(1,0,\emptyset,0)}(\sg) \geq 3$. 
Thus  
we have $\binom{2n-1}{2k-2}$ ways to choose 
the elements $\sg_3, \ldots, \sg_{2k-1}$ and 
then we have $B_{2k-3}(1)$ ways to order 
$\sg_3, \ldots, \sg_{2k-1}$ since 
$\red[\sg_3 \ldots \sg_{2k-1}]$ must be an element 
of $DU_{2k-3}$ and we have $B_{2n-2k+1}(1)$ ways to order 
$\sg_{2k+1} \ldots \sg_{2n+1}$ since 
$\red[\sg_{2k+1} \ldots \sg_{2n+1}]$ must be an element 
of $DU_{2n-2k+1}$. Hence, 
$$D^{(1,0,\emptyset,0)}_{2n+1}(x)|_{x^2} = (2n-1)B_{2n-1}(1) + 
\sum_{k=2}^{n-1} \binom{2n-1}{2k-2} B_{2k-3}(1) B_{2n-2k+1}(1).
$$

For part (4), suppose that 
$\sg = \sg_1 \ldots \sg_{2n} \in DU_{2n}$ and 
$\mmp^{(1,0,\emptyset,0)}(\sg) =2$.  Then 
$1 \in \{\sg_2,\sg_4, \ldots, \sg_{2n}\}$.  We then 
have three cases. \\

\noindent
{\bf Case 1.} $\sg_{2n} =1$. In this case, $\sg_{2n}$ does not match  $MMP(1,0,\emptyset,0)$ in $\sg$ so that it must 
be the case that if $\tau = \red[\sg_1 \ldots \sg_{2n-1}]$, then 
$\tau$ is a permutation in $DU_{2n-1}$ such that 
$\mmp^{(1,0,\emptyset,0)}(\tau)=2$. Thus, by part (3), we 
have $D^{(1,0,\emptyset,0)}_{2n-1}(x)|_{x^2}$ choices 
for $\sg_1 \ldots \sg_{2n-1}$.\\

\noindent
{\bf Case 2.} $\sg_2 =1$. In this case, we cannot have 
$\sg_1 = 2n$ because that would force 
$\mmp^{(1,0,\emptyset,0)}(\sg) =1$. Thus if 
$\sg_2 =1$, then $2\leq \sg_1 \leq 2n-1$ in which 
case $\sg_1$ and $\sg_2$ will be the only two elements 
of $\sg$ to match  
 $MMP(1,0,\emptyset,0)$ in $\sg$.
We then have $C_{2n-2}(1) = A_{2n-2}(1)$ ways to 
pick $\sg_3 \ldots \sg_{2n}$ as 
$\red[\sg_3 \ldots \sg_{2n}] \in DU_{2n-2}$. Thus the 
number of $\sg = \sg_1 \ldots \sg_{2n} \in DU_{2n}$ such 
that $\mmp^{(1,0,\emptyset,0)}(\sg) =2$ and $\sg_2 =1$ is 
$(2n-2)A_{2n-2}(1)$. \\

\noindent
{\bf Case 3.} $\sg_{2k} =1$ where $2 \leq k \leq n-1$. Then $\sg_{2k}$ 
matches  
 $MMP(1,0,\emptyset,0)$ in $\sg$. It follows 
that we cannot have $2n+1 \in \{\sg_3, \ldots \sg_{2n}\}$ since 
otherwise $\sg_1$ and $\sg_2$ would also match  
 $MMP(1,0,\emptyset,0)$ in $\sg$ which would 
force $\mmp^{(1,0,\emptyset,0)}(\sg) \geq 3$. Thus it 
must be the case that $\sg_1 =2n$. Again, 
if $s = \min(\{\sg_2, \ldots, \sg_{2k-1}\})$, then it 
must be the case that $\sg_2 =s$ since otherwise 
$s = \sg_{2j}$ for some $2 \leq j \leq k-1$ in which 
case both $\sg_2$ and $\sg_{2j}$ would match  
 $MMP(1,0,\emptyset,0)$ in $\sg$ which would 
force  $\mmp^{(1,0,\emptyset,0)}(\sg) \geq 3$. 
It follows that 
we have $\binom{2n-3}{2k-2}$ ways to choose 
the elements $\sg_2, \ldots, \sg_{2k-1}$ and 
then we have $B_{2k-3}(1)$ ways to order 
$\sg_3, \ldots, \sg_{2k-1}$ since 
$\red[\sg_3 \ldots \sg_{2k-1}]$ must be an element 
of $DU_{2n-3}$ and $A_{2n-2k}(1)$ ways to order 
$\sg_{2k+1} \ldots \sg_{2n}$ since 
$\red[\sg_{2k+1} \ldots \sg_{2n}]$ must be an element 
of $DU_{2n-2k}$. Hence the elements in Case 3 contribute 
$\sum_{k=2}^{n-1} \binom{2n-3}{2k-2} B_{2k-3}(1) A_{2n-2k}(1)$ to 
$C^{(1,0,\emptyset,0)}_{2n}(x)|_{x^2}$. Hence 
\begin{eqnarray*}
C^{(1,0,\emptyset,0)}_{2n}(x)|_{x^2} &=& 
D^{(1,0,\emptyset,0)}_{2n-1}(x)|_{x^2}+ (2n-2)A_{2n-2}(1) +\\
&&\sum_{k=2}^{n-1} \binom{2n-2}{2k-2} B_{2k-3}(1) A_{2n-2k}(1).
\end{eqnarray*}
\end{proof}

Finally, we have the following theorem which gives formulas 
for the second highest coefficient in 
$A^{(1,0,\emptyset,0)}_{2n}(x)$, $B^{(1,0,\emptyset,0)}_{2n+1}(x)$, 
$C^{(1,0,\emptyset,0)}_{2n}(x)$, and $D^{(1,0,\emptyset,0)}_{2n+1}(x)$. 
 None of the corresponding 
sequences had previously appeared in the OEIS \cite{oeis}.

\begin{theorem} \

\begin{itemize} 
 \item[\rm{(1)}]  For all $n \geq 2$, 
\begin{equation}\label{Asectop} 
A^{(1,0,\emptyset,0)}_{2n}(x)|_{x^{n-1}} = \frac{2}{3}\binom{n}{2} 
((2n-1)!!).
\end{equation}
\item[\rm{(2)}]  For all $n \geq 2$, 
\begin{equation}\label{Bsectop} 
B^{(1,0,\emptyset,0)}_{2n+1}(x)|_{x^{n-1}} = 
\left(\frac{7}{3}\binom{n}{2} + 2\binom{n}{3}\right)((2n-1)!!).
\end{equation}

\item[\rm{(3)}]  For all $n \geq 1$, 
\begin{eqnarray}\label{Dsectop} 
D^{(1,0,\emptyset,0)}_{2n+1}(x)|_{x^{n}} &=& 
\left(\sum_{k=1}^n \frac{(5k-4)k}{3} ((2k-2)!!)\prod_{i=k+1}^n (2i-1)\right) 
- \\ 
&&\frac{2}{3}\left(\binom{n}{2}-1\right)((2n-1)!!). \nonumber 
\end{eqnarray}  

\item[\rm{(4)}]  For all $n \geq 3$, 
\begin{eqnarray} \label{Csectop}
C^{(1,0,\emptyset,0)}_{2n}(x)|_{x^{n-1}} &=&
D^{(1,0,\emptyset,0)}_{2n-1}(x)|_{x^{n-1}} + \binom{2n-2}{2}
D^{(1,0,\emptyset,0)}_{2n-3}(x)|_{x^{n-2}} + \\
&&  \frac{28n^2-72n+39}{24}((2n-2)!!) - \frac{5}{3}\binom{n-1}{2}((2n-3)!!). 
\nonumber 
\end{eqnarray}
\end{itemize}
\end{theorem}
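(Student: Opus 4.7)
The plan is to prove each of the four identities by induction, using the same structural idea that drove the proofs of Theorem \ref{highest} and Theorem \ref{lowest}: condition on the position of the value $1$ and reduce to the smaller-size polynomials of the same four families. Throughout I shall rely on the combinatorial observation that $\sigma_i$ matches $MMP(1,0,\emptyset,0)$ in $\sigma$ precisely when $\sigma_i$ is a left-to-right minimum of $\sigma$ with some larger value appearing to its right. The top- and bottom-coefficient formulas in Theorems \ref{highest} and \ref{lowest}, together with Lemma \ref{BarDmax}, will supply the "known" inputs to the recurrences at each induction step.

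For part (1), set $a_n := A^{(1,0,\emptyset,0)}_{2n}(x)|_{x^{n-1}}$ and condition on the odd position $2k+1$ of $1$ in $\sigma \in UD_{2n}$. Since $\mmp^{(1,0,\emptyset,0)}(\sigma) = 1 + \mmp^{(1,0,\emptyset,0)}(\red[\sigma_1 \cdots \sigma_{2k}])$, and the prefix lies in $UD_{2k}$ with maximum statistic value $k$, the condition $\mmp = n-1$ forces $k \in \{n-2, n-1\}$. Applying Theorem \ref{highest}(1) to the $k = n-2$ prefix and using $a_{n-1}$ for the $k = n-1$ prefix gives the first-order recurrence
\begin{equation*}
a_n \;=\; 2\binom{2n-1}{3}(2n-5)!! \;+\; (2n-1)\,a_{n-1},
\end{equation*}
and a direct induction verifies that the solution is $\tfrac{2}{3}\binom{n}{2}(2n-1)!!$, giving \eqref{Asectop}. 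For part (2), I use the same conditioning on $\sigma \in UD_{2n+1}$, but now there is the extra case $\sigma_{2n+1} = 1$, in which $\sigma_{2n+1}$ itself fails to match (nothing lies to its right) and the reduced prefix is an arbitrary element of $UD_{2n}$ with $\mmp = n-1$, contributing exactly $A^{(1,0,\emptyset,0)}_{2n}(x)|_{x^{n-1}}$ from part (1). Assembling this together with the cases $k = n-2$ and $k = n-1$ and invoking part (1) yields \eqref{Bsectop} after a short simplification.

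Parts (3) and (4) are more intricate because in a down-up permutation whether $\sigma_1$ matches depends on whether $\sigma_1$ equals the maximum of $\sigma$, so a naive reduction of the prefix does not preserve the statistic. To handle this I use the auxiliary polynomials $\bar{D}^{(1,0,\emptyset,0)}_{2n+1}(x)$ and $\bar{C}^{(1,0,\emptyset,0)}_{2n}(x)$ introduced in Section~2, which unconditionally weight $\sigma_1$ as a match. For part (3), I condition on the position $2k$ of $1$ in $\sigma \in DU_{2n+1}$, split further by whether the maximum $2n+1$ lies in the prefix or suffix of $1$, and use Lemma \ref{BarDmax} for the top coefficient of the prefix $\bar{D}_{2k-1}$. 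The goal is to derive a first-order recurrence
\begin{equation*}
\tilde d_n \;=\; (2n-1)\,\tilde d_{n-1} \;+\; \tfrac{(5n-4)n}{3}(2n-2)!!
\end{equation*}
for an appropriately shifted second-highest coefficient $\tilde d_n$ of $\bar{D}^{(1,0,\emptyset,0)}_{2n+1}(x)$; iterating this recurrence produces the sum $\sum_{k=1}^n \tfrac{(5k-4)k}{3}(2k-2)!! \prod_{i=k+1}^n (2i-1)$ in \eqref{Dsectop}. The correction $-\tfrac{2}{3}(\binom{n}{2}-1)(2n-1)!!$ then appears after applying the identity \eqref{DDrec1} together with part (1) to pass from $\bar{D}$ back to $D$. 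Part (4) proceeds analogously: starting from \eqref{Crec2}, the second-highest coefficient of $\bar C$ decomposes into contributions from the two leading cases $\sigma_{2n} = 1$ and $\sigma_{2n-2} = 1$ (together with a residual from the remaining values of $k$), producing via Lemma \ref{BarDmax} and the analogue for $\bar D$ the first two $D$-terms of \eqref{Csectop}; the constants $\tfrac{28n^2-72n+39}{24}(2n-2)!!$ and $-\tfrac{5}{3}\binom{n-1}{2}(2n-3)!!$ arise by isolating the sub-cases where the maximum $2n$ lies in specific positions, and finally \eqref{CCrec1} converts $\bar C$ back to $C$.

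The main obstacle will be in parts (3) and (4): deriving the precise summand $\tfrac{(5k-4)k}{3}(2k-2)!!$ demands a clean enumeration of several overlapping sub-cases, split by (a) whether the maximum of $\sigma$ lies in the prefix or the suffix of $1$ and (b) whether, when the maximum is in the prefix, it occupies position $\sigma_1$. The bookkeeping needed to ensure that the over- or under-counting induced by the $x$-weight shift between $D$ and $\bar D$ (and between $C$ and $\bar C$) is exactly absorbed into the residual terms via \eqref{DDrec1} and \eqref{CCrec1} is the most delicate step; once the recurrences above are in hand, the verification that the claimed closed forms satisfy them is a routine induction.
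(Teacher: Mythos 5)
Your parts (1) and (2) are essentially the paper's own argument: the paper likewise conditions on the position of $1$, notes that only $\sg_{2n-3}=1$ or $\sg_{2n-1}=1$ (plus $\sg_{2n+1}=1$ in the odd case) are possible, and feeds in Theorem \ref{highest}(1) together with the inductive hypothesis; your recurrence $a_n=2\binom{2n-1}{3}(2n-5)!!+(2n-1)a_{n-1}$ is exactly the paper's induction step rewritten.

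For parts (3) and (4) the skeleton is right but the plan as written has concrete gaps. First, Lemma \ref{BarDmax} alone does not suffice: in the subcase $\sg_{2n}=1$, $\sg_{2n+1}=2n+1$ the reduced prefix $\tau$ must satisfy $\mmp^{(1,0,\emptyset,0)}(\tau)+\chi(\tau_1=2n-1)=n-1$, which is the \emph{second-highest} value for $\overline{D}_{2n-1}$; the paper needs a separate induction (Lemma \ref{barDsectop}, $\overline{D}_{2n+1}(x)|_{x^n}=\tfrac13(n^2-1)((2n)!!)$) to supply this count, and the same lemma is used again in part (4). Your proposal never identifies this auxiliary result. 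Second, the quantity satisfying your clean recurrence $\tilde d_n=(2n-1)\tilde d_{n-1}+\tfrac{(5n-4)n}{3}(2n-2)!!$ is a shift of $D_{2n+1}(x)|_{x^n}$ itself (the factor $2n-1$ counts the choices of $\sg_{2n+1}\neq 2n+1$ in the subcase that recurses on $D_{2n-1}$), not of $\overline{D}_{2n+1}(x)|_{x^n}$, whose recurrence carries the multiplier $2n$; and the correction $-\tfrac23\bigl(\binom{n}{2}-1\bigr)(2n-1)!!$ does not come from passing through (\ref{DDrec1}) --- extracting the coefficient of $x^n$ there yields $\overline{D}_{2n+1}(x)|_{x^n}+(2n-1)!!-\tfrac23\binom{n}{2}(2n-1)!!$, a different closed form that matches the stated sum only after an additional identity is proved. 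In the paper the correction is generated inside the induction, from the subcase $\sg_{2n-2}=1$ with $2n+1$ not among the last three entries, where Theorem \ref{highest}(4) contributes $(2n-4)!!-(2n-5)!!$. Finally, in part (4) the two cases you name are not enough: the case $\sg_{2n-4}=1$ (with its own split according to whether $2n$ lies among the last four entries) is precisely what produces the explicit terms $\tfrac{28n^2-72n+39}{24}(2n-2)!!$ and $-\tfrac53\binom{n-1}{2}(2n-3)!!$, so omitting it would leave the formula unaccounted for.
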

\begin{proof}

For (1), we proceed by induction on $n$. Now (\ref{Asectop}) holds 
for $n=2$ since $A^{(1,0,\emptyset,0)}_{4}(x)|_{x}=2$. 
Now suppose that $n > 2$, $\sg = \sg_1 \ldots \sg_{2n} \in UD_{2n}$   
and $\mmp^{(1,0,\emptyset,0)}(\sg) = n-1$.  Only $\sg_1, \sg_3, \ldots, 
\sg_{2n-1}$ can match  $MMP(1,0,\emptyset,0)$ in 
$\sg$.  Now it cannot be that $\sg_{2k+1} =1$ where $k < n-2$ since 
then $\mmp^{(1,0,\emptyset,0)}(\sg) \leq k+1< n-1$. Thus it 
must be the case that $\sg_{2n-3} =1$ or $\sg_{2n-1}=1$. 
Now if $\sg_{2n-3} =1$, then $\sg_{2n-3}$ matches  $MMP(1,0,\emptyset,0)$ in 
$\sg$ and $\sg_{2n-1}$ does not match
  $MMP(1,0,\emptyset,0)$ in $\sg$. Then we have 
$\binom{2n-1}{3}$ ways to 
choose the values of $\sg_{2n-2}$, $\sg_{2n-1}$, and $\sg_{2n}$ and 
we have two ways to order them. In addition, we must have that 
$ \mmp^{(1,0,\emptyset,0)}(\red[\sg_1 \ldots, \sg_{2n-4}]) = n-2$. 
But then by Theorem \ref{highest}, we have 
$(2n-5)!!$ ways to choose $\sg_1 \ldots \sg_{2n-4}$ so that the 
number of $\sg = \sg_1 \ldots \sg_{2n} \in UD_{2n}$ such that   
$\sg_{2n-3}=1$ and $\mmp^{(1,0,\emptyset,0)}(\sg) = n-1$ is 
\begin{equation*}
2\binom{2n-1}{3}(2n-5)!! = \frac{2}{3}(n-1)((2n-1)!!).
\end{equation*}
Now if $\sg_{2n-1}=1$, then we have $2n-1$ ways to pick the 
value of $\sg_{2n}$ and we must have that 
$ \mmp^{(1,0,\emptyset,0)}(\red[\sg_1 \ldots, \sg_{2n-2}]) = n-2$. 
Thus we have $A_{2(n-1)}(x)|_{x^{n-2}} = \frac{2}{3}\binom{n-1}{2}(2n-3)!!$ 
ways to choose $\sg_1 \ldots \sg_{2n-2}$. Thus the 
number of $\sg = \sg_1 \ldots \sg_{2n} \in UD_{2n}$ such that   
$\sg_{2n-1}=1$ and $\mmp^{(1,0,\emptyset,0)}(\sg) = n-1$ is 
$\frac{2}{3}\binom{n-1}{2}(2n-1)!!$. Hence 
$$
A_{2n}(x)|_{x^{n-1}}= \frac{2}{3}(n-1)((2n-1)!!) + 
\frac{2}{3}\binom{n-1}{2}((2n-1)!!) = \frac{2}{3}\binom{n}{2}((2n-1)!!).
$$

Part (2) can be proved by induction in a similar manner. 
Now (\ref{Bsectop}) holds 
for $n=2$ since $B^{(1,0,\emptyset,0)}_{5}(x)|_{x}=7$. 
Now suppose that $n > 2$, $\sg = \sg_1 \ldots \sg_{2n+1} \in UD_{2n+1}$,  
and $\mmp^{(1,0,\emptyset,0)}(\sg) = n-1$.  Only $\sg_1, \sg_3, \ldots, 
\sg_{2n-1}$ can match  $MMP(1,0,\emptyset,0)$ in 
$\sg$.  Again it cannot be that $\sg_{2k+1} =1$ where $k < n-2$ since 
then $\mmp^{(1,0,\emptyset,0)}(\sg) \leq k+1 < n-1$. Thus it 
must be the case that $\sg_{2n-3} =1$, $\sg_{2n-1}=1$, 
and $\sg_{2n+1}=1$. Thus we have three cases. \\

\noindent
{\bf Case A.} $\sg_{2n-3} =1$. Then $\sg_{2n-3}$ matches  $MMP(1,0,\emptyset,0)$ in 
$\sg$ and $\sg_{2n-1}$ does not match
  $MMP(1,0,\emptyset,0)$ in $\sg$. Then we have 
$\binom{2n}{4}$ ways to 
choose the values of $\sg_{2n-2}$, $\sg_{2n-1}$, $\sg_{2n}$, and 
$\sg_{2n+1}$, and 
we have 5 ways to order them. In addition, we must have that 
$ \mmp^{(1,0,\emptyset,0)}(\red[\sg_1 \ldots, \sg_{2n-4}]) = n-2$. 
But then by Theorem \ref{highest}, we have 
$(2n-5)!!$ ways to pick $\sg_1 \ldots \sg_{2n-4}$ so that the 
number of $\sg = \sg_1 \ldots \sg_{2n} \in UD_{2n}$ such that   
$\sg_{2n-3}=1$ and $\mmp^{(1,0,\emptyset,0)}(\sg) = n-1$ is 
\begin{equation*}
5\binom{2n}{4}(2n-5)!! = \frac{5}{6}n(n-1)((2n-1)!!) = 
\frac{5}{3}\binom{n}{2}((2n-1)!!).
\end{equation*}

\noindent
{\bf Case B.}  $\sg_{2n-1}=1$. 
Then we have $\binom{2n}{2}$ ways to pick the 
values of $\sg_{2n}$ and $\sg_{2n+1}$ and we must have that 
$ \mmp^{(1,0,\emptyset,0)}(\red[\sg_1 \ldots, \sg_{2n-2}]) = n-2$. 
Thus we have $A_{2(n-1)}(x)|_{x^{n-2}} = \frac{2}{3}\binom{n-1}{2}(2n-3)!!)$ 
ways to pick $\sg_1 \ldots \sg_{2n-2}$. 
Thus 
number of $\sg = \sg_1 \ldots \sg_{2n} \in UD_{2n}$ such that   
$\sg_{2n-1}=1$ and $\mmp^{(1,0,\emptyset,0)}(\sg) = n-1$ is 
$$\binom{2n}{2} \frac{2}{3}\binom{n-1}{2}((2n-3)!!)=   
\frac{2}{3}n\binom{n-1}{2}((2n-1)!!) = 2\binom{n}{3}((2n-1)!!).$$

\noindent
{\bf Case C.} $\sg_{2n+1} =1$. In this case, $\sg_{2n+1}$ does not match 
$MMP(1,0,\emptyset,0)$ in $\sg$ so that we must have 
that $ \mmp^{(1,0,\emptyset,0)}(\red[\sg_1 \ldots, \sg_{2n}]) = n-1$. 
By part (1), we have $\frac{2}{3} \binom{n}{2} ((2n-1)!!)$ ways to choose 
$\sg_1 \ldots \sg_{2n}$ in this case. \\
\ \\
Thus it follows that  
$B_{2n+1}(x)|_{x^{n-1}}= (\frac{7}{3}\binom{n}{2} + 2\binom{n}{3})((2n-1)!!).$

Before we can prove part (3), we first need to establish the 
following lemma.

\begin{lemma}\label{barDsectop}  For $n \geq 1$, 
\begin{equation}
\overline{D}_{2n+1}(x)|_{x^n} = \frac{1}{3}(n^2-1)((2n)!!).
\end{equation}
\end{lemma}
\begin{proof}
We proceed by induction on $n$.  The lemma holds 
for $n=1$ since $\overline{D}^{(1,0,\emptyset,0)}_3(x) = 2x^2$ so 
that $\overline{D}_3(x)|_{x}=0$. 
Now assume that $n \geq 2$ and 
$\sg = \sg_1 \ldots \sg_{2n+1} \in DU_{2n+1}$,  
and $\mmp^{(1,0,\emptyset,0)}(\sg) + \chi(\sg_1 = 2n+1) = n$.  Only $\sg_1,\sg_2, \sg_4, \ldots, 
\sg_{2n}$ can match  $MMP(1,0,\emptyset,0)$ in 
$\sg$.  
Now it cannot be that $\sg_{2k} =1$ where $k \leq  n-2$ since 
then $\mmp^{(1,0,\emptyset,0)}(\sg) + \chi(\sg_1 = 2n+1) \leq k+1 < n$. Thus it 
must be the case that $\sg_{2n-2} =1$ or $\sg_{2n}=1$. 
Now if $\sg_{2n-2} =1$, then $\sg_{2n-2}$ matches  $MMP(1,0,\emptyset,0)$ in 
$\sg$ and $\sg_{2n}$ does not match
  $MMP(1,0,\emptyset,0)$ in $\sg$. Then we have 
$\binom{2n}{3}$ ways to 
choose the values of $\sg_{2n-1}$, $\sg_{2n}$, and $\sg_{2n+1}$ and 
we have two ways to order them. In addition, we must have that if 
$\tau = \red[\sg_1 \ldots, \sg_{2n-3}]$, then 
$ \mmp^{(1,0,\emptyset,0)}(\tau) + \chi(\tau_1 = 2n-3)= n-1$. 
By Lemma \ref{BarDmax}, 
we then have 
$(2n-4)!!$ ways to choose $\sg_1 \ldots \sg_{2n-3}$ so that the 
number of $\sg = \sg_1 \ldots \sg_{2n+1} \in DU_{2n+1}$ such that   
$\sg_{2n-2}=1$ and $\mmp^{(1,0,\emptyset,0)}(\sg) + \chi(\sg_1 = 2n+1)= n$ is 
\begin{equation*}
2\binom{2n}{3}(2n-4)!! = \frac{2n-1}{3}((2n)!!).
\end{equation*}

Now if $\sg_{2n}=1$, then we have $2n$ ways to pick the 
value of $\sg_{2n+1}$ and if \\
$\alpha = \red[\sg_1 \ldots, \sg_{2n-1}]$, then 
$ \mmp^{(1,0,\emptyset,0)}(\alpha) + \chi(\alpha_1 = 2n-1)= n-1$. 
Then we have $\overline{D}_{2(n-1)+1}(x)|_{x^{n-1}} = \frac{1}{3}((n-1)^2-1)
((2n-2)!!)$ 
ways to choose $\sg_1 \ldots \sg_{2n-1}$. Thus 
number of $\sg = \sg_1 \ldots \sg_{2n} \in UD_{2n}$ such that   
$\sg_{2n-1}=1$ and $\mmp^{(1,0,\emptyset,0)}(\sg) + \chi(\sg_1 = 2n+1)= n$ is 
$\frac{1}{3}((n-1)^2-1)(2n)!!$. Hence 
$$\overline{D}_{2n+1}(x)|_{x^{n}}= \frac{2n-1}{3}((2n)!!) + \frac{1}{3}
((n-1)^2-1)((2n)!!) = \frac{1}{3}(n^2-1)((2n)!!).$$
\end{proof}

We prove part (3) by induction. 
We have that (\ref{Dsectop}) holds 
for $n=2$ since $D^{(1,0,\emptyset,0)}_{5}(x)|_{x}=9$. 
Now suppose that $n > 2$, $\sg = \sg_1 \ldots \sg_{2n+1} \in DU_{2n+1}$,  
and $\mmp^{(1,0,\emptyset,0)}(\sg) = n$.  Only $\sg_1,\sg_2, \sg_4, \ldots, 
\sg_{2n}$ can match $MMP(1,0,\emptyset,0)$ in 
$\sg$.  It cannot be that $\sg_{2k} =1$ where $k \leq  n-2$ since 
then $\mmp^{(1,0,\emptyset,0)}(\sg) \leq k+1<n$. Hence it 
must be the case that $\sg_{2n-2} =1$ or $\sg_{2n}=1$. 
Thus we have two cases. \\

\noindent
{\bf Case I.} $\sg_{2n-2} =1$.  Then $\sg_{2n-2}$ matches  $MMP(1,0,\emptyset,0)$ in 
$\sg$ and $\sg_{2n}$ does not match
  $MMP(1,0,\emptyset,0)$ in $\sg$. Then 
we have two subcases.\\

\noindent
{\bf Subcase I.a.} $2n+1 \in \{\sg_{2n-1},\sg_{2n},\sg_{2n+1}\}$. In this case, we have $\binom{2n-1}{2}$ ways to choose the values 
of the other 2 elements in  the set $\{\sg_{2n-1},\sg_{2n},\sg_{2n+1}\}$ and 
then we have 
2 ways to order $\sg_{2n-1}\sg_{2n}\sg_{2n+1}$. Then since 
 $2n+1 \in \{\sg_{2n-1},\sg_{2n},\sg_{2n+1}\}$, 
we are guaranteed that $\sg_1$ matches  
$MMP(1,0,\emptyset,0)$  in $\sg$.  Thus when 
we consider $\tau =\red[\sg_1 \ldots \sg_{2n-3}]$, we must have that  
$\mmp^{(1,0,\emptyset,0)}(\tau)+\chi(\tau_1 = 2n-3)= n-1$. It follows 
from Lemma \ref{BarDmax} that we have 
$(2n-4)!!$ ways to pick $\sg_1 \ldots \sg_{2n-3}$.  Hence the 
$\sg \in DU_{2n+1}$  in this subcase contribute 
$2\binom{2n-1}{2}(2n-4)!! = (2n-1)((2n-2)!!)$ to 
$D^{(1,0,\emptyset,0)}_{2n+1}(x)|_{x^{n}}$. \\

\noindent
{\bf Subcase I.b.} $2n+1 \not\in \{\sg_{2n-1},\sg_{2n},\sg_{2n+1}\}$. We then have $\binom{2n-1}{3}$ ways to choose the values 
of the elements of $\{\sg_{2n-1},\sg_{2n},\sg_{2n+1}\}$ and 
2 ways to order them. Because $2n+1 \not \in \{\sg_{2n-1},\sg_{2n},\sg_{2n+1}\}$, we are not guaranteed that $\sg_1$ matches  
$MMP(1,0,\emptyset,0)$  in $\sg$.  Thus when we consider $\tau =\red[\sg_1 \ldots \sg_{2n-3}]$, we must have that 
$\mmp^{(1,0,\emptyset,0)}(\tau)= n-1$. It follows 
from Theorem \ref{highest}  that we have 
$(2n-4)!!-(2n-5)!!$ ways to pick $\sg_1 \ldots \sg_{2n-3}$. 
Thus the $\sg \in DU_{2n+1}$  in this subcase contribute 
$$2\binom{2n-1}{3}((2n-4)!!-(2n-5)!!) =
\frac{(2n-1)(2n-3)}{3} ((2n-2)!!) -\frac{2}{3}(n-1)((2n-1)!!)$$
to 
$D^{(1,0,\emptyset,0)}_{2n+1}(x)|_{x^{n}}$. \\

\noindent
{\bf Case II.}  $\sg_{2n}=1$. In this case $\sg_{2n}$ matches  
$MMP(1,0,\emptyset,0)$ in $\sg$. Then again, we have two subcases.\\

\noindent
{\bf Subcase II.a.} $\sg_{2n+1} = 2n+1$. Because $2n+1 = \sg_{2n+1}$, 
we are guaranteed that $\sg_1$ matches  
$MMP(1,0,\emptyset,0)$ in $\sg$.  Thus when we consider $\tau =\red[\sg_1 \ldots \sg_{2n-1}]$, we must have that 
$\mmp^{(1,0,\emptyset,0)}(\tau)+\chi(\tau_1=2n-1)= n-1$. It follows 
from Lemma \ref{barDsectop} that we have 
$\frac{1}{3}((n-1)^2-1)((2n-2)!!)$ ways to choose $\sg_1 \ldots \sg_{2n-3}$.  Hence the 
$\sg \in DU_{2n+1}$  in this subcase contribute 
$\frac{1}{3}((n-1)^2-1)((2n-2)!!)$ to 
$D^{(1,0,\emptyset,0)}_{2n+1}(x)|_{x^{n}}$. \\

\noindent
{\bf Subcase II.b.} $2n+1 \neq \sg_{2n+1}$. We then have $(2n-1)$ ways to choose the value  
of $\sg_{2n+1}$. Because $2n+1 \neq \sg_{2n+1}$, then 
we are not guaranteed that $\sg_1$ matches  
$MMP(1,0,\emptyset,0)$ in $\sg$.  Thus when we consider 
$\tau =\red[\sg_1 \ldots \sg_{2n-1}]$, we must have that 
$\mmp^{(1,0,\emptyset,0)}(\tau)= n-1$.  It then follows by induction 
that the permutations $\sg \in DU_{2n+1}$ in this subcase 
contribute 
\begin{eqnarray*}
(2n-1)\left(\left(\sum_{k=1}^{n-1} \frac{(5k-4)k}{3} ((2k-2)!!)\prod_{i=k+1}^{n-1} (2i-1)\right) 
- \frac{2}{3}\left(\binom{n-1}{2}-1\right)((2n-3)!!)\right) = \\
\left(\sum_{k=1}^{n-1} \frac{(5k-4)k}{3} ((2k-2)!!)\prod_{i=k+1}^{n} (2i-1)\right) 
- \frac{2}{3}\left(\binom{n-1}{2}-1\right)((2n-1)!!)
\end{eqnarray*}
to 
$D^{(1,0,\emptyset,0)}_{2n+1}(x)|_{x^{n}}$.\\
\ \\ 
It follows that 

\begin{eqnarray*}
D^{(1,0,\emptyset,0)}_{2n+1}(x)|_{x^n} &=&  
(2n-1)((2n-2)!!) + \frac{1}{3}((n-1)^2-1)((2n-2)!!) + \\
&&\frac{(2n-1)(2n-3)}{3} ((2n-2)!!) -
\frac{2n-2}{3}((2n-1)!!)+\\ 
&&\left(\sum_{k=1}^{n-1} \frac{(5k-4)k}{3} ((2k-2)!!)\prod_{i=k+1}^{n} (2i-1)\right) - \\
&&\frac{2}{3}\left(\binom{n-1}{2}-1\right)((2n-1)!!) \\
&=&((2n-2)!!) \left( (2n-1) +\frac{(2n-1)(2n-3)}{3} + \frac{(n-1)^2-1}{3}\right) - \\
&&((2n-1)!!)\left( \frac{2}{3}(n-1) + \frac{2}{3}\left(\binom{n-1}{2}-1\right)\right) + \\
&&\left(\sum_{k=1}^{n-1} \frac{(5k-4)k}{3} ((2k-2)!!)\prod_{i=k+1}^{n} (2i-1)\right)  \\
&=&\frac{(5n-4)n}{3}((2n-2)!!) - \frac{2}{3}\left(\binom{n}{2}-1\right)((2n-1)!!) + \\
&&\left(\sum_{k=1}^{n-1} \frac{(5k-4)k}{3} ((2k-2)!!)\prod_{i=k+1}^{n} (2i-1)\right)\\
&=&\left(\sum_{k=1}^{n} \frac{(5k-4)k}{3} ((2k-2)!!)\prod_{i=k+1}^{n} (2i-1)\right)- \frac{2}{3}\left(\binom{n}{2}-1\right)((2n-1)!!).
\end{eqnarray*}

For part (4), suppose that 
$\sg = \sg_1 \ldots \sg_{2n} \in DU_{2n}$. Then 
only $\sg_1, \sg_2, \ldots, \sg_{2n-2}$ can match 
 $MMP(1,0,\emptyset,0)$ in $\sg$. Thus 
it cannot be the case that $\sg_{2k}=1$ where $k < n-2$ since 
then $\mmp^{(1,0,\emptyset,0)}(\sg) \leq k+1 < n-1$.  Thus 
we must have $1 \in \{\sg_{2n-4},\sg_{2n-2},\sg_{2n}\}$.  We then  
have three cases. \\

\noindent
{\bf Case 1.}  $\sg_{2n}=1$.  In this case, $\sg_{2n}$ does not match 
$MMP(1,0,\emptyset,0)$ in $\sg$.
Hence, it must be the case that 
$\mmp^{(1,0,\emptyset,0)}(\red[\sg_1 \ldots \sg_{2n-1}])=n-1$ so 
that by part (3), we have 
$D^{(1,0,\emptyset,0)}_{2n-1}(x)|_{x^{n-1}}$ ways to choose 
$\sg_1 \ldots \sg_{2n-1}$. \\

\noindent
{\bf Case 2.}  $\sg_{2n-2}=1$. In this case, $\sg_{2n-2}$ matches $MMP(1,0,\emptyset,0)$ in $\sg$. We then 
have two subcases.\\

\noindent
{\bf Subcase 2.1.} $\sg_{2n-1}= 2n$. In this case, we are guaranteed that $\sg_1$ will match 
$MMP(1,0,\emptyset,0)$ in $\sg$. Thus 
if $\tau = \red[\sg_1 \ldots \sg_{2n-3}]$, then we must have 
that 
$\mmp^{(1,0,\emptyset,0)}(\tau)+\chi(\tau_1 = 2n-3) =n-2$. We then 
have $(2n-2)$ ways to choose $\sg_{2n}$ and, once we have 
chosen $\sg_{2n}$,  
we have $\frac{2}{3}((n-2)^2-1)((2n-4)!!)$ ways to choose 
$\sg_1 \ldots \sg_{2n-3}$ by Lemma \ref{barDsectop}.  Thus the permutations 
$\sg \in DU_{2n}$ in this case contribute 
$\frac{2}{3}((n-2)^2-1)((2n-2)!!)$ to 
$C_{2n}(x)|_{x^{n-1}}$. \\

\noindent
{\bf Subcase 2.2.} $\sg_{2n-1}\neq 2n$. In this case, we are not guaranteed that $\sg_1$ will match 
$MMP(1,0,\emptyset,0)$ in $\sg$. Thus 
if $\tau = \red[\sg_1 \ldots \sg_{2n-3}]$, then we must have 
that 
$\mmp^{(1,0,\emptyset,0)}(\tau)=n-2$. We then 
have $\binom{2n-2}{2}$ ways to choose 
$\sg_{2n-1}$ and $\sg_{2n}$ and, once we have 
chosen $\sg_{2n-1}$ and $\sg_{2n}$, we have 
$D_{2n-3}(x)|_{x^{n-2}}$ ways to choose 
$\sg_1 \ldots \sg_{2n-3}$.  Thus the permutations 
$\sg \in DU_{2n}$ in this case contribute 
$\binom{2n-2}{2}D_{2n-3}(x)|_{x^{n-2}}$ to 
$C_{2n}(x)|_{x^{n-1}}$. \\

\noindent
{\bf Case 3.} $\sg_{2n-4} =1$. In this case, $\sg_{2n-4}$ matches 
 $MMP(1,0,\emptyset,0)$ in $\sg$, but 
$\sg_{2n-2}$ does not match  
 $MMP(1,0,\emptyset,0)$ in $\sg$. Again we  
have two subcases.\\

\noindent
{\bf Subcase 3.1.} $2n \in \{\sg_{2n-3},\sg_{2n-2},\sg_{2n-1},\sg_{2n}\}$. 
In this case, we are guaranteed that $\sg_1$ will match 
$MMP(1,0,\emptyset,0)$ in $\sg$. Thus 
if $\tau = \red[\sg_1 \ldots \sg_{2n-5}]$, then we must have 
that \\
$\mmp^{(1,0,\emptyset,0)}(\tau)+\chi(\tau_1 = 2n-5)=n-2$. We then 
have $\binom{2n-2}{3}$ ways to choose the remaining elements for 
$\{\sg_{2n-3},\sg_{2n-2},\sg_{2n-1},\sg_{2n}\}$.  Once we have 
chosen the remaining elements for 
$\{\sg_{2n-3},\sg_{2n-2},\sg_{2n-1},\sg_{2n}\}$,  we have 5 ways to order 
them and we have $\overline{D}_{2n-5}(x)|_{x^{n-2}}$ ways to choose 
$\sg_1 \ldots \sg_{2n-5}$.  By Lemma \ref{BarDmax}, 
$\overline{D}_{2n-5}(x)|_{x^{n-2}} =(2n-6)!!$. 
Thus the permutations 
$\sg \in DU_{2n}$ in this case contribute 
$5\binom{2n-2}{3}((2n-6)!!) = \frac{5}{6}(2n-3)((2n-2)!!)$ to 
$C_{2n}(x)|_{x^{n-1}}$. \\

\noindent
{\bf Subcase 3.2.} $2n \not \in \{\sg_{2n-3},\sg_{2n-2},\sg_{2n-1},\sg_{2n}\}$. In this case, we are not guaranteed that $\sg_1$ will match 
$MMP(1,0,\emptyset,0)$ in $\sg$. Thus 
if $\tau = \red[\sg_1 \ldots \sg_{2n-5}]$, then we must have 
that 
$\mmp^{(1,0,\emptyset,0)}(\tau)=n-2$. We then 
have $\binom{2n-2}{4}$ ways to choose the set  
$\{\sg_{2n-3},\sg_{2n-2},\sg_{2n-1},\sg_{2n}\}$.  Once we have 
chosen $\{\sg_{2n-3},\sg_{2n-2},\sg_{2n-1},\sg_{2n}\}$, 
we have 5 ways to order 
them and we have $D_{2n-5}(x)|_{x^{n-2}}$ ways to choose 
$\sg_1 \ldots \sg_{2n-5}$.  By Theorem \ref{highest}, 
$D_{2n-5}(x)|_{x^{n-2}} =(2n-6)!!-(2n-7)!!$. 
Thus the permutations 
$\sg \in DU_{2n}$ in this case contribute 
$$
5\binom{2n-2}{4}((2n-6)!!-(2n-7)!!) = \frac{5}{24}(2n-3)(2n-5)((2n-2)!!) 
- \frac{5}{3} \binom{n-1}{2} ((2n-3)!!)$$ to 
$C_{2n}(x)|_{x^{n-1}}$. \\
\ \\
It follows that 
\begin{eqnarray*}
C_{2n}(x)|_{x^{n-1}} &=& D_{2n-1}(x)|_{x^{n-1}} + \binom{2n-2}{2} D_{2n-3}(x)|_{x^{n-2}} + \\
&& \frac{2}{3}((n-2)^2-1)((2n-2)!!) + \frac{5}{6}(2n-3)((2n-2)!!)\\
&& \frac{5}{24}(2n-3)(2n-5)((2n-2)!!) - \frac{5}{3} \binom{n-1}{2} ((2n-3)!!)\\
&=& D_{2n-1}(x)|_{x^{n-1}} + \binom{2n-2}{2} D_{2n-3}(x)|_{x^{n-2}} + \\
&&\left(\frac{28n^2-72n+39}{24}\right)((2n-2)!!) - \frac{5}{3} \binom{n-1}{2} ((2n-3)!!).
\end{eqnarray*}

\end{proof}

\section{Conclusions}

As pointed out in 
\cite{kitrem2}, the simple type of recursions for 
the distribution of $\mmp^{(1,0,\emptyset,0)}(\sg)$ 
for $\sg$ in $UD_n$ or $DU_n$ proved  
in this paper no longer hold for the distribution 
of $\mmp^{(k,0,0,0)}(\sg)$ and   $\mmp^{(k,0,0,0)}(\sg)$ 
for $\sg$ in $UD_n$ or $DU_n$ if $k \geq 2$. 
For example, suppose that we try to develop a recursion 
for $A^{(2,0,\emptyset,0)}_{2n}(x) = \sum_{\sg \in UD_{2n}} 
x^{\mmp^{(2,0,\emptyset,0)}(\sg)}$. Then if we consider the permutations 
$\sg = \sg_1 \ldots \sg_{2n} \in UD_{2n}$ such that 
$\sg_{2k+1} =1$, we still have $\binom{2n-1}{2k}$ ways to 
pick the elements for $\sg_1 \ldots \sg_{2k}$.  However, 
in this case the question of whether some $\sg_i$ with 
$i \leq  2k$ matches $MMP(2,0,\emptyset,0)$ in $\sg$ 
is dependent on what values occur in $\sg_{2k+2} \ldots 
\sg_{2n}$. For example, if $2n \in \{\sg_{2k+2}, \ldots , \sg_{2n}\}$, 
then every $\sg_i$ with $i \leq k$ will match $MMP(2,0,\emptyset,0)$ in $\sg$. However, if $2n \in 
\{\sg_1, \ldots , \sg_{2k-1}\}$, this will not be the case. Thus 
we cannot develop a simple recursion for 
$A^{(2,0,\emptyset,0)}_{2n}(x)$.

However, one can develop recursions similar to 
the ones used in this paper to 
study the distribution in up-down and down-up permutations 
of other quadrant marked meshed patterns $MMP(a,b,c,d)$ in 
the case where $a,b,c,d \in \{\emptyset,1\}$. 
Indeed, in some cases, there are simple relations between such distributions 
beyond those given in Proposition \ref{prop1}.  
For example, consider the statistics 
$\mmp^{(1,0,\emptyset,0)}(\sg)$ and $\mmp^{(0,0,\emptyset,0)}(\sg)$ over 
$UD_{2n}$. Clearly, for any $\sg = \sg_1 \ldots \sg_{2n} \in UD_{2n}$, 
$\sg_{2i}$ can never match  
$MMP(1,0,\emptyset,0)$ or $MMP(0,0,\emptyset,0)$ since $(2i-1,\sg_{2i-1})$ 
will always 
be an element of $G(\sg)$ that lies in the third quadrant with respect to the coordinate 
system centered at $(2i,\sg_{2i})$.  On the other hand, elements 
of the form $\sg_{2i-1}$ for $i=1, \ldots, n$, always have 
an element $G(\sg)$ in the first quadrant relative to  the coordinate  
system centered at $(2i-1,\sg_{2i-1})$, namely, $(2i,\sg_{2i})$. 
Thus if $\sg \in UD_{2n}$, then 
$\mmp^{(1,0,\emptyset,0)}(\sg)= \mmp^{(0,0,\emptyset,0)}(\sg)$. 
Therefore, for all $n \geq 1$, 
$$A_{2n}^{(1,0,\emptyset,0)}(x) =  A_{2n}^{(0,0,\emptyset,0)}(x).$$

It is not true that $\mmp^{(1,0,\emptyset,0)}(\sg)= \mmp^{(0,0,\emptyset,0)}(\sg)$ for all $\sg \in UD_{2n+1}$ since if $\sg = \sg_1 \ldots \sg_{2n+1} 
\in UD_{2n+1}$ and $\sg_{2n+1} =1$, then $\sg_{2n+1}$ matches 
$MMP(0,0,\emptyset,0)$ in $\sg$ but does not match 
$MMP(1,0,\emptyset,0)$ in $\sg$. However, this 
is the only case where $\mmp^{(1,0,\emptyset,0)}(\sg)$ and $\mmp^{(0,0,\emptyset,0)}(\sg)$ differ. That is, if $\sg = \sg_1 \ldots \sg_{2n+1} \in UD_{2n+1}$ 
and $\sg_{2n+1} \neq 1$, then $\sg_{2n+1}$ does not match $MMP(0,0,\emptyset,0)$ in $\sg$ and we can argue as above that 
$\mmp^{(1,0,\emptyset,0)}(\sg)= \mmp^{(0,0,\emptyset,0)}(\sg)$.
However if $\sg_{2n+1} =1$, then $\sg_{2n+1}$ matches 
$MMP(0,0,\emptyset,0)$ in $\sg$ but does not match 
$MMP(1,0,\emptyset,0)$ in $\sg$.  Thus if  
$\sg \in UD_{2n+1}^{(2n+1)}$, $1+\mmp^{(1,0,\emptyset,0)}(\sg)= \mmp^{(0,0,\emptyset,0)}(\sg)$. It is easy to see that 
$$\sum_{\sg \in UD_{2n+1}^{(2n+1)}} x^{\mmp^{(1,0,\emptyset,0)}(\sg)} = 
A_{2n}^{(1,0, \emptyset,0)}(x)$$ 
so that for all $n \geq 1$, 
$$B_{2n+1}^{(0,0,\emptyset,0)}(x) + (1-x) A_{2n}^{(1,0, \emptyset,0)}(x)
= B_{2n+1}^{(1,0,\emptyset,0)}(x).$$

A slightly more subtle relation holds between 
the distribution of $\mmp^{(1,0,0,0)}(\sg)$ and $\mmp^{(1,0,\emptyset,0)}(\sg)$ for $\sg \in UD_{2n}$. For example, in \cite{kitrem2}, the authors 
computed the following table for $A_{2n}^{(1,0,0,0)}(x)$. \\
\ \\
\begin{tabular}{|l|l|}
\hline
$n$ &  $A^{(1,0,0,0)}_{2n}(x)$ \\
\hline
0 & 1 \\
\hline
1 & x\\
\hline
2 & $x^2 (3+2 x)$ \\
\hline
3 & $x^3 \left(15+30 x+16 x^2\right)$\\
\hline
4 & $x^4 \left(105+420 x+588 x^2+272 x^3\right)$\\
\hline
5 & $x^5 \left(945+6300 x+16380 x^2+18960
x^3+7936 x^4\right)$\\
\hline
6 & $x^6 \left(10395+103950 x+429660 x^2+893640 x^3+911328 x^4+
353792 x^5\right)$\\
\hline
\end{tabular} 
\ \\

Comparing the tables for  $A_{2n}^{(1,0,0,0)}(x)$ and 
$A_{2n}^{(1,0,\emptyset,0)}(x)$, one is naturally led 
to conjecture that 
for all $n \geq 1$ and $1 \leq k \leq n$, 
\begin{equation}\label{Arelation}
A_{2n}^{(1,0,\emptyset,0)}(x)|_{x^k} = A_{2n}^{(1,0,0,0)}(x)|_{x^{2n-k}}.
\end{equation}
This follows from comparing 
$\sg = \sg_1 \ldots \sg_{2n} \in UD_{2n}$ with its reverse complement 
$(\sg^r)^c = (2n+1 -\sg_{2n}) (2n+1 -\sg_{2n-1}) \ldots (2n+1-\sg_1)$ which 
is also in $UD_{2n}$.  That is, suppose that 
$\sg_i$ matches $MMP(1,0,\emptyset,0)$ in $\sg$. Then 
$i$ must be odd, i.e. $i = 2k+1$ for some $0 \leq k \leq n-1$, and there 
must be no elements 
in $\sg_1 \ldots \sg_{2k}$ which are less than $\sg_{2k+1}$. This 
means that in $(\sg^r)^c$, $(2n+1 -\sg_{2k+1})$ has no elements 
to its right which are greater than $(2n+1 -\sg_{2k+1})$ so that 
 $(2n+1 -\sg_{2k+1})$ will not match $MMP(1,0,0,0)$ in $(\sg^r)^c$.
Vice versa, if $\sg_{2k+1}$ does not match $MMP(1,0,\emptyset,0)$ 
in $\sg$, there is an element 
in $\sg_1 \ldots \sg_{2k}$ which is less than $\sg_{2k+1}$. This 
means that in $(\sg^r)^c$, $(2n+1 -\sg_{2k+1})$ has an element  
to its right which is  greater than $(2n+1 -\sg_{2k+1})$ 
so that $(2n+1 -\sg_{2k+1})$ will match 
$MMP(1,0,0,0)$ in $(\sg^r)^c$.  Similarly, in $\sg$, none 
of $\sg_2, \sg_4, \ldots , \sg_{2n}$ will match $MMP(1,0,\emptyset,0)$ 
while in $(\sg^r)^c$, each of 
$(2n+1 -\sg_2), \ldots, (2n+1-\sg_{2n})$ will match 
$MMP(1,0,0,0)$ in $(\sg^r)^c$.  Thus it follows 
that for all $\sg \in UD_{2n}$, 
$$n+(n-\mmp^{(1,0,\emptyset,0)}(\sg)) = \mmp^{(1,0,0,0)}((\sg^r)^c).$$
This shows that (\ref{Arelation}) holds.

There is no such simple relation between the distribution 
of $\mmp^{(1,0,0,0)}(\sg)$ and the distribution of 
$\mmp^{(1,0,\emptyset,0)}(\sg)$ for $UD_{2n+1}$, $DU_{2n}$ or 
$DU_{2n+1}$ as can be seen from the following tables 
computed in \cite{kitrem2}. \\
\ \\
\begin{tabular}{|l|l|}
\hline
$n$ &  $B^{(1,0,0,0)}_{2n-1}(x)$ \\
\hline 
1& 1 \\
\hline
2 & $2 x$\\
\hline
3 & $8 x^2 (1+x)$ \\
\hline 
4 & $16 x^3 \left(3+8 x+6 x^2\right)$ \\
\hline
5& $128 x^4 \left(3+15 x+27 x^2+17 x^3\right)$ \\
\hline 
6 & $256 x^5 \left(15+120 x+381
x^2+556 x^3+310 x^4\right)$\\
\hline
7 & $1024 x^6 \left(45+525 x+2562 x^2+6420 x^3+8146 x^4+4146 x^5\right)$\\
\hline
\end{tabular}\\
\ \\
\ \\
\begin{tabular}{|l|l|}
\hline
$n$ &  $C^{(1,0,0,0)}_{2n}(x)$ \\
\hline
0 & 1 \\
\hline
1 & 1 \\
\hline 
2 & $x (2+3 x)$\\
\hline 
3 & $x^2 \left(8+28 x+25 x^2\right)$ \\
\hline 
4 & $x^3 \left(48+296 x+614 x^2+427 x^3\right)$ \\
\hline 
5 & $x^4 \left(384+3648 x+13104 x^2+20920
x^3+12465 x^4\right)$\\
\hline 
6 & $x^5 \left(3840+51840 x+282336 x^2+769072 x^3+
1039946 x^4+555731 x^5\right)$
\\
\hline 
\end{tabular}\\
\ \\
\ \\
\begin{tabular}{|l|l|}
\hline
$n$ &  $D^{(1,0,0,0)}_{2n-1}(x)$ \\
\hline
1 & 1 \\
\hline 
2 & $x (1+x)$ \\
\hline 
3 & $x^2 \left(3+8 x+5 x^2\right)$ \\
\hline 
4 & $x^3 \left(15+75 x+121 x^2+61 x^3\right)$ \\
\hline 
5 & $x^4 \left(105+840 x+2478 x^2+3128 x^3+1385
x^4\right)$ \\
\hline 
6 & $x^5 \left(945+11025 x+51030 x^2+115350 x^3+124921 x^4+50521 x^5\right)$ \\
\hline 
7 & $x^6 \left(10395+166320 x+1105335 x^2+3859680 x^3+7365633 x^4+7158128
x^5+2702765 x^6\right)$\\
\hline
\end{tabular}
\ \\

Based on these tables, we conjectured in 
\cite{kitrem2} that the polynomials $A^{(1,0,0,0)}_{2n}(x)$, $B^{(1,0,0,0)}_{2n+1}(x)$, $C^{(1,0,0,0)}_{2n}(x)$, and 
$D^{(1,0,0,0)}_{2n+1}(x)$ are unimodal for all $n \geq 1$.  We also 
conjecture that  $A^{(1,0,\emptyset,0)}_{2n}(x)$, $B^{(1,0,\emptyset,0)}_{2n+1}(x)$, $C^{(1,0,\emptyset,0)}_{2n}(x)$, and 
$D^{(1,0,\emptyset,0)}_{2n+1}(x)$ are unimodal for all $n \geq 1$.

Finally, we suggest that it should be interesting to study the distribution of 
quadrant marked mesh patterns on other classes of pattern-restricted permutations such as 2-{\em stack-sortable permutations} or {\em vexillary permutations}
(see \cite{kit} for definitions of these) and many other permutation classes having nice properties.


\begin{thebibliography}{10}


\bibitem{Andre1}
D. {Andr\'{e}}, D\'{e}veloppements de sec x et de tang x, \emph{C.
  R. Acad. Sci. Paris}, \textbf{88} (1879), 965--967.

\bibitem{Andre2}
D. {Andr\'{e}}, M\'{e}moire sur les permutations altern\'{e}es,
{\em J. Math. Pur. Appl.}, \textbf{7} (1881), 167--184.


\bibitem{AKV} S. Avgustinovich, S. Kitaev and A. Valyuzhenich, Avoidance of boxed mesh patterns on permutations, preprint.

\bibitem{BrCl} P. Br\"and\'en and A. Claesson, Mesh patterns and the expansion of permutation statistics as sums of permutation patterns, {\em Elect. J. Comb.} {\bf 18(2)} (2011), \#P5, 14pp. 


\bibitem{HilJonSigVid} \'I. Hilmarsson, I. J\'onsd\'ottir, S. Sigurdardottir, H. \'Ulfarsson and S. Vidarsd\'ottir, Partial Wilf-classification of small mesh patterns, in preparation.

\bibitem{kit} S. Kitaev, Patterns in permutations and words, Springer-Verlag, (2011).


\bibitem{kitrem} S. Kitaev and J. Remmel, Quadrant marked mesh patterns, 
{\em J. Integer Sequences}, {\bf 12} Issue 4 (2012), Article 12.4.7.


\bibitem{kitrem2} S. Kitaev and J. Remmel, Quadrant marked mesh patterns in 
alternating permutations, preprint.

\bibitem{kitremtie1}  S. Kitaev, J. Remmel and M. Tiefenbruck, Quadrant marked mesh patterns in 132-avoiding permutations I, preprint.

\bibitem{kitremtie2}  S. Kitaev, J. Remmel and M. Tiefenbruck, Quadrant marked mesh patterns in 132-avoiding permutations II, in preparation.



\bibitem{oeis} N.~J.~A.~Sloane, The on-line encyclopedia of integer sequences,
published electronically at \phantom{*} {\tt
http:/$\!\!$/www.research.att.com/\~{}njas/sequences/}.

\bibitem{Stan} R.P. Stanley, Enumerative Combinatorics, vol. 2, Cambridge University Press, (1999).
\bibitem{Ulf} H. \'Ulfarsson, A unification of permutation patterns related to Schubert varieties, arXiv:1002.4361 (2011).

\end{thebibliography}
\end{document}